\newtoks\prt
\newtheorem{thm}{Theorem}[section]
\newtheorem{lemma}[thm]{Lemma}
\newtheorem{cor}[thm]{Corollary}
\newtheorem{example}[thm]{Example}
\newtheorem*{question}{Question}
\theoremstyle{definition}
\newtheorem{remark}[thm]{Remark}
\def\eqn#1$$#2$${\begin{equation}\label#1#2\end{equation}}
\def\F{\mathcal F}
\def\K{\mathcal K}
\def\P{\mathcal P}
\def\ep{\varepsilon}
\def\K{\mathcal K}
\def\en{\mathbb N}
\def\er{\mathbb R}
\def\ov{\overline}
\def \asep {\operatorname{asep}}
\def\span{\operatorname{span}}
\def \reg {\partial _{\kern1pt\text{reg}}}
\def\dd{\operatorname{d}}
\def\dh{\widehat{\operatorname{d}}}
\def\clu#1#2{\operatorname{clust}_{#1^{**}}(#2)}
\def\abs#1{\left|#1\right|}
\newcommand{\norm}[1]{\left\|#1\right\|}
\newcommand{\ca}[1]{\operatorname{ca}\left(#1\right)}
\newcommand{\sm}[1]{\operatorname{sm}\left(#1\right)}
\newcommand{\cca}[2][]{\operatorname{cca}_{#1}\left(#2\right)}
\newcommand{\tcca}[2][]{\widetilde{\operatorname{cca}}_{#1}\left(#2\right)}
\newcommand{\wu}[1]{\operatorname{wu}\left(#1\right)}
\newcommand{\swu}[1]{\operatorname{wus}\left(#1\right)}
\newcommand{\twu}[1]{\widetilde{\operatorname{wu}}\left(#1\right)}
\newcommand{\wca}[1]{\widetilde{\operatorname{ca}}\left(#1\right)}
\newcommand{\wk}[2][X]{\operatorname{wk}_{#1}\left(#2\right)}
\newcommand{\wck}[2]{\operatorname{wck}_{#1}\left(#2\right)}
\newcommand{\wscl}[1]{\overline{#1}^{w^*}}
\newcommand{\bs}[1]{\operatorname{bs}\left(#1\right)}
\newcommand{\wbs}[1]{\operatorname{wbs}\left(#1\right)}
\newcommand{\upleq}{\rotatebox{90}{$\,\leq\ $}}
\begin{document}

\title{Quantification of the Banach-Saks property}
\author{Hana Bendov\'a, Ond\v{r}ej F.K. Kalenda and Ji\v{r}\'{\i} Spurn\'y}

\address{Department of Mathematical Analysis \\
Faculty of Mathematics and Physic\\ Charles University\\
Sokolovsk\'{a} 83, 186 \ 75\\Praha 8, Czech Republic}
\email{haaanja@gmail.com}
\email{kalenda@karlin.mff.cuni.cz}
\email{spurny@karlin.mff.cuni.cz}

\subjclass[2010]{46B20}
\keywords{Banach-Saks set; weak Banach-Saks set; uniformly weakly convergent sequence; $\ell_1$-spreading model; quantitative versions; measures of weak non-compactness}

\thanks{Our research was supported in part by the grant GA\v{C}R P201/12/0290}

\begin{abstract}
We investigate possible quantifications of the Banach-Saks property and the weak Banach-Saks property.
We prove quantitative versions of relationships of the Banach-Saks property of a set with norm compactness and weak compactness. We further establish
a quantitative version of the characterization of the weak Banach-Saks property of a set using uniform weak convergence
and $\ell_1$-spreading models. We also study the case of the unit ball and in this case we prove a dichotomy which is an 
analogue of the James distortion theorem for $\ell_1$-spreading models.
\end{abstract}
\maketitle


\section{Introduction}

A bounded subset $A$ of a Banach space $X$ is said to be a \emph{Banach-Saks set} if each sequence in $A$ has a Ces\`aro convergent subsequence. A Banach space $X$ is said to have the \emph{Banach-Saks property} if each bounded sequence in $X$ has a Ces\`aro convergent subsequence, i.e., if its closed unit ball $B_X$ is a Banach-Saks set.

This property goes back to S.~Banach and S.~Saks who proved in \cite{BS} that, in the modern terminology, the spaces $L^p(0,1)$ for $p\in(1,+\infty)$ enjoy the Banach-Saks property. Any Banach space with the Banach-Saks property is reflexive \cite{NiWa} and there are reflexive spaces without the Banach-Saks property \cite{baer}. On the other hand, superreflexive spaces enjoy the Banach-Saks property
(S.~Kakutani showed in \cite{Kaku} that uniformly convex spaces have the Banach-Saks property and by \cite{enflo} any superreflexive space admits a uniformly convex renorming).

A localized version of the mentioned result of \cite{NiWa} says that any Banach-Saks set is relatively weakly compact (see \cite[Proposition 2.3]{LA}). This inspires the definition of the \emph{weak Banach-Saks property} -- a Banach space $X$ is said to have this property if any weakly compact subset of $X$ is a Banach-Saks set, i.e., if any weakly convergent sequence in $X$ admits a Ces\`aro convergent subsequence. There are nonreflexive spaces enjoying the weak Banach-Saks property, for example $c_0$ or $L^1(\mu)$ \cite{farnum,szlenk}.

In the present paper we investigate possibilities of quantifying the Banach-Saks property. This is inspired by a large number of recent results on quantitative versions of various theorems and properties of Banach spaces, see, e.g., \cite{AC-meas,AC-jmaa,CKS,CMR,f-krein,Gr-krein,kks-adv,bendova}. Another approach to quantification of the Banach-Saks property and some related properties was followed by A.~Kryczka in a recent series of papers \cite{kr08a,kr08s,kr12,kr13}. More precisely, in the quoted papers mainly quantitative versions of Banach-Saks and weak Banach-Saks operators are investigated.

The quantification means, roughly speaking, to replace implications between some notions by inequalities between certain quantities. Let us now introduce the basic quantities we will use.

Let $(x_k)$ be a bounded sequence in a Banach space. Following \cite{kks-adv} we set
\begin{eqnarray}
\ca{x_k}&=&\inf_{n\in\en} \sup\{\norm{x_k-x_l}\colon k,l\ge n\},\\
\wca{x_k}&=&\inf \{\ca{x_{k_n}}\colon (x_{k_n})\text{ is a subsequence of }(x_k)\}.
\end{eqnarray}
The first quantity measures how far the sequence is from being norm Cauchy. Clearly, $\ca{x_k}=0$ if and only if the sequence $(x_k)$ is norm Cauchy (hence norm convergent).

Since we are interested in Ces\`aro convergence of sequences, it is natural to define
\begin{eqnarray}
\cca{x_k}&=&\ca{\frac1k(\sum_{i=1}^k x_i)},\\
\tcca{x_k}&=&\inf\{\cca{x_{k_n})}\colon (x_{k_n})\text{ is a subsequence of }(x_k)\}.
\end{eqnarray}

Let us remark that the quantities $\cca{}$ and $\tcca{}$ behave differently than the quantities $\ca{}$ and $\wca{}$. More precisely, the quantity $\ca{}$ decreases when passing to a subsequence but it is not the case of $\cca{}$. Indeed, a subsequence of a Ces\`aro convergent sequence need not be Ces\`aro convergent, in fact, any bounded sequence is a subsequence of a Ces\`aro convergent sequence.

For a  bounded set $A$ in a Banach space $X$ we introduce the following two quantities:
\begin{eqnarray}
\bs{A}&=&\sup\{\tcca{x_k}\colon (x_k)\subset A\},\\
\wbs{A}&=&\sup\{\tcca{x_k}\colon (x_k)\subset A\text{ is  weakly convergent}\}.
\end{eqnarray}

The first one measures how far is $A$ from being Banach-Saks. Indeed, $\bs{A}=0$ if and only if $A$ is Banach-Saks by Corollary~\ref{c:kvalit} below. Further, the same statement yields that $\wbs A=0$ if and only if any weakly convergent sequence in $A$ has a Ces\`aro convergent subsequence (let us stress that the limit could be outside $A$). The sets with the latter property will be called \emph{weak Banach-Saks sets}.

\section{Preliminaries}

We use mostly a standard notation. If $X$ is a Banach space, $B_X$ denotes its closed unit ball. If $A$ is any set, we denote by $\# A$ the cardinality of $A$. (We use this notation mainly for finite sets).

We investigate, among others, quantifications of the relationship of the Banach-Saks property to compactness and weak compactness.
To formulate such results we need some quantities measuring non-compactness and weak non-compactness.
There are several ways how to do it. We will use the notation from \cite{kks-adv}. Let us recall the basic quantities.

If $A$ and $B$ are two nonempty subsets of a Banach space $X$, we set
\begin{eqnarray*}
	\dd(A,B)&=&\inf\{\|a-b\|: a\in A, b\in B\}, \\
	\dh(A,B)&=&\sup\{\dd(a,B) : a\in A\}.
\end{eqnarray*}
Hence, $\dd(A,B)$ is the ordinary distance of the sets $A$ and $B$ and $\dh(A,B)$ is the non-symmetrized Hausdorff distance (note that the Hausdorff distance of $A$ and $B$ is equal to $\max\{\dh(A,B),\dh(B,A)\}$).

Let $A$ be a bounded subset of a Banach space $X$. Then the Hausdorff measure of non-compactness
of $A$ is defined by
\begin{equation*}
	\chi(A)=\inf\{\dh(A,F): \emptyset\ne F\subset X\mbox{ finite}\} =
	\inf\{\dh(A,K): \emptyset\ne K\subset X\mbox{ compact}\}.
\end{equation*}

This is the basic measure of non-compactness. We will need one more such measure:
\begin{equation*}
 \beta(A)=\sup\{\wca{x_k} : (x_k)\mbox{ is a sequence in }A\}.
\end{equation*}
It is easy to check that for any bounded set $A$ we have
\begin{equation*}
		\chi(A)\le\beta(A)\le 2\chi(A),
\end{equation*}
thus these two measures are equivalent. (And, of course, they equal zero if and only if the respective set is relatively compact.)

An analogue of the Hausdorff measure of non-compactness for measuring weak non-compactness is
the de Blasi measure of weak non-compactness
\begin{equation*}
	\omega(A)=\inf\{\dh(A,K) : \emptyset\ne K\subset X\mbox{ is weakly compact}\}.	
\end{equation*}
Then $\omega(A)=0$ if and only if $A$ is relatively weakly compact. Indeed, the `if' part is obvious and the `only if' part follows from \cite[Lemma 1]{deblasi}.

There is another set of quantities measuring weak non-compactness. Let us mention two of them:
\begin{eqnarray*}
\wk{A}&=&\dh(\wscl{A},X),\\
\wck{X}{A}&=&\sup\{\ \dd(\clu{X}{x_k},X) : (x_k)\mbox{ is a sequence in }A\}.
\end{eqnarray*}
By $\wscl{A}$ we mean the weak$^*$ closure of $A$ in $X^{**}$ (the space $X$ is canonically embedded in $X^{**}$) and $\clu{X}{x_k}$ is the set of all weak$^*$ cluster points in $X^{**}$ of  the sequence $(x_k)$. 
It follows from \cite[Theorem 2.3]{AC-meas} that for any bounded subset $A$ of a Banach space $X$
we have
\begin{gather*}
\wck X{A}\le\wk{A}\le 2\wck X{A}, \\
\wk{A}\le \omega(A).
\end{gather*}

So, putting together these inequalities with the measures of norm non-compactness we obtain the
following diagram:
\begin{equation*}
	\begin{array}{cccccccc}
	&&\chi(A)&\le&\beta(A)&\le&2\chi(A) \\&&\upleq&&&&& \\
	&&\omega(A)&&&&& \\ &&\upleq&&&&& \\
	\wck{X}{A}&\le&\wk{A}&\le& 2\wck{X}{A}&&
\end{array}
\end{equation*}

Let us remark that the inequality $\omega(A)\le\chi(A)$ is obvious and that the quantities $\omega(\cdot)$ and $\wk{\cdot}$ are not equivalent, see \cite{tylli-cambridge,AC-meas}.

Some quantities related to the Banach-Saks property were defined and used in \cite{kr12}. Let us recall them. 
If $(x_k)$ is a bounded sequence in $X$, we define the \emph{arithmetic separation} of $(x_k)$ by 
$$\asep(x_k)=\inf\left\{\norm{\frac1{\#F}\left(\sum_{n\in F} x_n-\sum_{n\in H} x_n\right)} : 
F,H\subset \en, \#F=\#H, \max F<\min H\right\}.$$
Further, for any bounded set $A\subset X$ define
$$\begin{aligned}
\varphi(A)&=\sup\{\asep(x_k):(x_k)\mbox{ is a sequence in }A\},\\
\varphi'(A)&=\sup\{\asep(x_k):(x_k)\mbox{ is a weakly convergent sequence in }A\}.
\end{aligned}$$
The quantities $\asep$ and $\varphi$ are from \cite{kr12}, the quantity $\varphi'$ is an obvious modification.

\section{Quantitative relation to compactness and weak compactness}

Since any convergent sequence is also Ces\`aro convergent, relatively norm compact sets are Banach-Saks. Further, a set is Banach-Saks if and only if it is weakly Banach-Saks and relatively weakly compact. In this section we investigate quantitative versions of these relationships. Positive results are summed up in the following theorem.

\begin{thm}
\label{P:wbs<bs<beta}
Let $A$ be a bounded subset of a Banach space $X$. Then
\begin{equation}
\label{E:wbs<bs<beta}
\max\{\wck XA,\wbs{A}\}\le \bs{A}\le \beta(A).
\end{equation}
\end{thm}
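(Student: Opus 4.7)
The theorem consists of three inequalities, the first two of which are relatively routine and the third the main difficulty. The inequality $\wbs{A} \le \bs{A}$ is immediate from the definitions, since $\wbs{A}$ is a supremum over a subclass (weakly convergent sequences in $A$) of the sequences used to define $\bs{A}$.

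For $\bs{A} \le \beta(A)$, it suffices to prove the pointwise estimate $\cca{z_k} \le \ca{z_k}$ for any bounded sequence $(z_k)$ in $X$; passing to the infimum over subsequences gives $\tcca{x_k} \le \wca{x_k}$, and taking the supremum over $(x_k) \subset A$ yields the inequality. I would prove the pointwise estimate using the identity
\[
\sigma_n - \sigma_m = \frac{1}{nm}\sum_{i=1}^n\sum_{j=1}^m (z_i - z_j)
\]
(with $\sigma_n = \tfrac{1}{n}\sum_{i=1}^n z_i$) to bound $\|\sigma_n - \sigma_m\|$. Fixing $\epsilon > 0$ and choosing $N$ so that $\|z_i - z_j\| \le \ca{z_k} + \epsilon$ whenever $i, j \ge N$, the pairs with $\min(i,j) < N$ contribute at most $2MN(n+m)/(nm)$ (where $M = \sup_k \|z_k\|$) and the remaining pairs contribute at most $\ca{z_k} + \epsilon$; letting $n, m \to \infty$ and $\epsilon \to 0$ completes the argument.

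For $\wck{X}{A} \le \bs{A}$, given $(x_k) \subset A$ and $\epsilon > 0$, the aim is to produce a weak$^*$ cluster point of $(x_k)$ in $X^{**}$ at distance at most $\bs{A} + \epsilon$ from $X$. The basic strategy: since $\tcca{x_k} \le \bs{A}$, choose a subsequence $(y_n) = (x_{k_n})$ with $\cca{y_n} \le \bs{A} + \epsilon$, so that the Ces\`aro averages $\sigma_n = \tfrac{1}{n}\sum_{i=1}^n y_i$ satisfy $\|\sigma_n - \sigma_m\| \le \bs{A} + 2\epsilon$ for $n, m \ge n_0$. Any weak$^*$ cluster point $\sigma^{**}$ of $(\sigma_n)$ in $X^{**}$ then satisfies $\|\sigma^{**} - \sigma_{n_0}\|_{X^{**}} \le \bs{A} + 2\epsilon$ by weak$^*$ lower semicontinuity of the norm, hence $\dd(\sigma^{**}, X) \le \bs{A} + 2\epsilon$.

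The main obstacle is that the Ces\`aro cluster point $\sigma^{**}$ need not itself be a weak$^*$ cluster point of the original sequence $(x_k)$: elementary examples (e.g.\ $x_k = (-1)^k$) show that the cluster points of a sequence and those of its Ces\`aro averages can genuinely differ. To resolve this I would refine the selection of $(y_n)$: exploiting that $Y := \overline{\span}\{x_k\}$ is separable, perform a diagonal extraction along a countable weak$^*$-dense family in $B_{Y^*}$ to arrange that $(y_n)$ itself weak$^*$-converges in $Y^{**} \subset X^{**}$ to a chosen cluster point $x^{**}$ of $(x_k)$. Ces\`aro averaging preserves weak$^*$ convergence, so the averages $\sigma_n$ also weak$^*$-converge to $x^{**}$, which then plays the role of $\sigma^{**}$ in the previous argument and yields $\dd(x^{**}, X) \le \bs{A} + 2\epsilon$ with $x^{**}$ a genuine cluster point of $(x_k)$. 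The hardest step is to reconcile the two competing demands on $(y_n)$---control of $\cca{y_n}$ and weak$^*$ convergence to $x^{**}$---and to handle the case when $Y^*$ is non-separable, where weak$^*$ convergence on a countable dense set does not automatically extend to the full $B_{Y^*}$; this likely requires a simultaneous extraction combined with ultrafilter arguments anchored on the Cauchy-like behaviour of $(\sigma_n)$.
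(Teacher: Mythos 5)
Your handling of the first two inequalities is fine: $\wbs{A}\le\bs{A}$ is indeed immediate, and your proof of $\bs{A}\le\beta(A)$ via the pointwise estimate $\cca{z_k}\le\ca{z_k}$ (splitting the double sum $\sum_{i}\sum_j(z_i-z_j)/nm$) is exactly the paper's argument. The genuine gap is in $\wck{X}{A}\le\bs{A}$, and it sits precisely at the step you flag as hardest. Your plan requires a subsequence $(y_n)$ of $(x_k)$ that weak$^*$-converges in $X^{**}$ to a cluster point of $(x_k)$; but such a subsequence need not exist, and it fails to exist exactly in the situations where the inequality has content. If $(x_k)$ is the unit vector basis of $\ell_1$, no subsequence $(e_{k_n})$ is weak$^*$-convergent in $\ell_1^{**}$ (test against $f=\chi_{\{k_{2n}:n\in\en\}}\in\ell_\infty$), and by Rosenthal's theorem this obstruction occurs whenever $(x_k)$ has an $\ell_1$-subsequence. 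The diagonal extraction along a countable weak$^*$-dense set $D\subset B_{Y^*}$ does not repair this: it gives convergence of $y^*(y_n)$ only for $y^*\in D$, and since $D$ is weak$^*$-dense but not norm-dense (for $Y=\ell_1$ the dual $\ell_\infty$ is not norm-separable), this neither upgrades to weak$^*$ convergence of $(y_n)$ nor forces the limit to agree with a prescribed cluster point off $D$. An ultrafilter limit $x^{**}=w^*\hbox{-}\lim_{\mathcal U}y_n$ is a legitimate cluster point of $(x_k)$, but it has no reason to coincide with $w^*\hbox{-}\lim_{\mathcal U}\sigma_n$, which is the object you actually control; your own example $x_k=(-1)^k$ already shows these can differ, and the closing appeal to ``ultrafilter arguments'' does not bridge that difference.

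The paper crosses exactly this gap by a double-limit device rather than by extracting convergent subsequences. It introduces the quantity $\gamma_0(A)$, the supremum of $|\lim_m\lim_n x^*_m(x_n)|$ over sequences $(x_n)$ in $A$ and weak$^*$-null sequences $(x^*_m)$ in $B_{X^*}$, proves $\gamma_0(A)\le\bs{A}$ by an argument close in spirit to yours (if some weak$^*$-null $(x^*_j)$ sees a persistent iterated limit $>c$ along $(x_k)$, then the Ces\`aro means of every subsequence oscillate by at least $c$), and then invokes the quantitative James compactness theorem of Cascales--Kalenda--Spurn\'y to get $\gamma_0(A)=\wck{X}{A}$ when $(B_{X^*},w^*)$ is angelic, reducing the general case to the separable one by restricting to $Y=\overline{\span}\{x_k\}$. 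To complete your route you would either have to reprove that equality, or split by Rosenthal's dichotomy and supply a separate quantitative estimate in the $\ell_1$-subsequence case; neither ingredient is present in the proposal.
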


The second inequality in the theorem quantifies the implication

\begin{center}
\textit{$A$ is relatively norm compact $\Rightarrow$ $A$ is Banach-Saks,}
\end{center}

the first one quantifies the implication

\begin{center}
\textit{$A$ is Banach-Saks $\Rightarrow$ $A$ is weakly Banach-Saks and relatively weakly compact.}
\end{center}

We point out that the latter implication cannot be quantified by using the de Blasi measure $\omega$ and that the converse implication is purely qualitative. This is illustrated by the following two examples.

\begin{example}
\label{EX:omega,bs}
There exists a separable Banach space $X$ such that
\[
\forall \ep>0\ \exists A\subset B_X\colon \bs{A}<\ep\ \&\ \omega(A)>\frac12.
\]
\end{example}

\begin{example}
\label{EX:be+ell1}
There exists a separable Banach space $X$ such that
\[
\forall \ep>0\ \exists A\subset B_X\colon \bs{A}=2\ \&\ \wbs{A}=0\ \&\ \omega(A)<\ep.
\]
\end{example}

The rest of this section will be devoted to the proofs of these results.
The proof of Example~\ref{EX:be+ell1} will be postponed to the next section since we will
make use of Theorem~\ref{P:wbs<2wu<4sm}.

\begin{proof}[Proof of Theorem~\ref{P:wbs<bs<beta}]

We start by the second inequality. It immediately follows from the following lemma which is a quantitative version of the well-known fact that any convergent sequence is Ces\`aro convergent.

\begin{lemma} Let $(x_k)$ be a bounded sequence in a Banach space $X$. Then
$$\cca{x_k}\le\ca{x_k}.$$
\end{lemma}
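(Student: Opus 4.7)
The plan is to unwrap the definition of $\ca{\cdot}$ and show that the Ces\`aro averages inherit the Cauchy behaviour of the original sequence, with only a vanishing correction coming from the initial segment of the sequence.

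Set $c=\ca{x_k}$ and fix $\ep>0$. By the definition of $\ca{\cdot}$, there is $n_0\in\en$ such that $\|x_i-x_j\|<c+\ep$ whenever $i,j\ge n_0$. Let $M=\sup_i\|x_i\|<\infty$ (the sequence is bounded) and set $s_n=\tfrac{1}{n}\sum_{i=1}^n x_i$. The key observation is the symmetric double-sum identity
\[
s_k-s_l=\frac{1}{kl}\sum_{i=1}^{k}\sum_{j=1}^{l}(x_i-x_j),
\]
which is immediate from $\sum_i\sum_j(x_i-x_j)=l\sum_i x_i-k\sum_j x_j$.

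I would then estimate by the triangle inequality, splitting the pairs $(i,j)\in\{1,\dots,k\}\times\{1,\dots,l\}$ into those with $\min(i,j)\ge n_0$, for which $\|x_i-x_j\|<c+\ep$, and the remaining pairs, for which I use only the crude bound $\|x_i-x_j\|\le 2M$. The number of pairs of the second kind is at most $(n_0-1)(k+l)$, so
\[
\|s_k-s_l\|\le (c+\ep)+\frac{2M(n_0-1)(k+l)}{kl}
= (c+\ep)+2M(n_0-1)\Bigl(\tfrac{1}{k}+\tfrac{1}{l}\Bigr).
\]
Taking $k,l$ large enough makes the second term smaller than $\ep$, so $\sup\{\|s_k-s_l\|:k,l\ge N\}\le c+2\ep$ for some $N$, hence $\cca{x_k}\le c+2\ep$. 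Letting $\ep\to 0$ gives $\cca{x_k}\le\ca{x_k}$.

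There is no real obstacle: the only subtlety is choosing a representation of $s_k-s_l$ that treats both indices symmetrically, so that the ``bad'' block of pairs (those involving an early $x_i$ or $x_j$) is of size $O(n_0(k+l))$ rather than $O(kl)$, and is therefore washed out by the factor $1/(kl)$. Writing $s_l-s_k$ by pulling out $\sum_{i=k+1}^l x_i$ in the asymmetric way would give only the weaker bound $2\ca{x_k}$, so the symmetric double sum is the right trick.
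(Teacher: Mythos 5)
Your proof is correct and follows essentially the same route as the paper: both rest on the symmetric double-sum identity for $s_k-s_l$ and split the pairs into those with both indices beyond $n_0$ (bounded by roughly $\ca{x_k}$) and the $O(n_0(k+l))$ remaining pairs (bounded by $2M$ and washed out by the factor $1/(kl)$). No gaps; the paper's three-block splitting of the double sum is just a slightly different bookkeeping of your two cases.
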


\begin{proof} Set $M=\sup_k\|x_k\|$ and fix any $c>\ca{x_k}$. Further, set
$y_m=\frac1m\sum_{i=1}^m x_{i}$ for $m\in\en$.

We find $n_0\in\en$ such that $\norm{x_{n}-x_{m}}<c$ for each $n,m\ge n_0$.
Let $\ep>0$ be given. We find $n_1\ge n_0$ such that $\frac{Mn_0}{n_1}<\ep$.
Then we have for $n_1\le n\le m$ inequalities
\[
\begin{aligned}
\norm{y_m-y_n}&= \norm{\frac1m\sum_{i=1}^ m x_{i}-\frac1n\sum_{j=1}^nx_{j}}
=\norm{\sum_{i=1}^m\sum_{j=1}^n\frac{x_i-x_j}{mn}}\\
&\le
\sum_{i=1}^m\sum_{j=1}^{n_0}\frac{\norm{x_i-x_j}}{mn}
+\sum_{i=1}^{n_0}\sum_{j=n_0+1}^{n}\frac{\norm{x_i-x_j}}{mn}
+\sum_{i=n_0+1}^{m}\sum_{j=n_0+1}^{n}\frac{\norm{x_i-x_j}}{mn} \\
&\le \frac{2M m n_0}{mn}+\frac{2Mn_0(n-n_0)}{mn}+ \frac{(m-n_0)(n-n_0)c}{mn}\\
&\le \frac{2M n_0}{n_1}+\frac{2Mn_0}{n_1} + c<c+4\ep.
\end{aligned}
\]
Since $\ep$ is arbitrary, $\cca{x_{k}}=\ca{y_m}\le c$. \end{proof}

Since the inequality $\bs A\ge \wbs A$ is obvious, it remains to prove $\bs A\ge \wck XA$. To do that we first prove the following lemma using an auxiliary quantity $\gamma_0$ defined by the formula

\begin{eqnarray*}\gamma_0(A)&=&\sup\{\ |\lim_m\lim_n x^*_m(x_n)|:      \\ && \qquad\qquad (x^*_m)\mbox{ is a sequence in } B_{X^*} \mbox{ weak* converging to }0,  \\ && \qquad\qquad (x_n) \mbox{ is a sequence in } A  \\ && \qquad\qquad\mbox{ and all the involved limits exist}\}.
\end{eqnarray*}

\begin{lemma}
\label{L:gamma0<bs}
Let $A$ be a bounded set in a Banach space $X$. Then
\begin{equation*}
\gamma_0(A)\le \bs{A}.
\end{equation*}
\end{lemma}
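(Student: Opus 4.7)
The plan is to exploit the definition of $\bs(A)$ by passing to a subsequence of $(x_n)$ whose Cesàro means are eventually close, then apply each weak$^*$-null functional to these Cesàro means and take limits.

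First, I would fix any configuration witnessing $\gamma_0(A)$: a sequence $(x^*_m)\subset B_{X^*}$ with $x^*_m\to 0$ in weak$^*$, a sequence $(x_n)\subset A$, and the iterated limit $a:=\lim_m\lim_n x^*_m(x_n)$ existing. Set $\alpha_m=\lim_n x^*_m(x_n)$, so $\alpha_m\to a$. The key observation is that the definitions of $\bs$ and of the iterated limits are compatible with passing to a subsequence of $(x_n)$: for any subsequence $(x_{n_k})$ we still have $\lim_k x^*_m(x_{n_k})=\alpha_m$ for every $m$, and hence the double limit remains $a$. Given $\ep>0$, I can therefore use the definition of $\bs(A)$ (applied to $(x_n)\subset A$) to choose a subsequence, which I relabel $(x_n)$, with
\[\cca{x_n}<\bs{A}+\ep.\]

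Set $y_k=\frac1k\sum_{i=1}^k x_i$. By the previous displayed inequality, $\ca{y_k}<\bs(A)+\ep$, so there exists $N$ such that $\|y_k-y_N\|<\bs(A)+\ep$ for every $k\ge N$. Since a convergent numerical sequence has the same Cesàro limit, for each fixed $m$ we have $x^*_m(y_k)\to\alpha_m$ as $k\to\infty$. Applying $x^*_m\in B_{X^*}$ to $y_k-y_N$ and letting $k\to\infty$ gives
\[|\alpha_m-x^*_m(y_N)|\le \bs{A}+\ep\]
for every $m$. Now $y_N$ is a fixed vector in $X$ and $x^*_m\to 0$ in weak$^*$, so $x^*_m(y_N)\to 0$; passing to the limit $m\to\infty$ yields $|a|\le\bs(A)+\ep$. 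Since $\ep>0$ was arbitrary and the configuration was arbitrary, $\gamma_0(A)\le\bs(A)$.

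The main potential obstacle is the legitimacy of the subsequence replacement: one must check that the iterated-limit value witnessing $\gamma_0(A)$ is preserved when $(x_n)$ is thinned, which is immediate because each inner limit $\lim_n x^*_m(x_n)$ is unaffected by passing to a subsequence, and the outer limit depends only on the inner limits. Everything else is a routine two-step limit argument combining the norm Cauchy estimate on $(y_k)_{k\ge N}$ with weak$^*$ convergence of $(x^*_m)$.
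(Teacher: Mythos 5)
Your proof is correct and rests on the same ingredients as the paper's: Ces\`aro means, the fact that $x^*_m(y_k)\to\alpha_m$ for each fixed $m$, the weak$^*$-nullity of $(x^*_m)$ applied to a single fixed mean $y_N$, and the stability of the iterated limit under passing to subsequences. The only difference is the arrangement of quantifiers -- the paper fixes $c<\gamma_0(A)$ and shows $\cca{x_k}\ge c$ for \emph{every} subsequence, hence $\tcca{x_k}\ge c$, while you pick an $\ep$-optimal subsequence and bound $\abs{a}\le\bs{A}+\ep$ -- which is the same argument read in the dual direction.
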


\begin{proof}
Let $\gamma_0(A)>c$. Then there exists a sequence $(x_k)$ in $A$ and a sequence $(x_j^*)$ in $B_{X^*}$ weak* converging to $0$ such that $\lim_j\lim_k x_j^*(x_k)>c$.
Without loss of generality we may assume that
\[
\forall j\in\en\colon \lim_k x_j^*(x_k)>c.
\]
Let $y_k=\frac1k(x_1+\cdots +x_k)$, $k\in\en$. Then
\[
\forall j\in\en\colon \lim_k x_j^*(y_k)=\lim_k x_j^*(x_k)>c.
\]
Let $\ep>0$ be arbitrary. Fix $k\in\en$. Since $(x^*_j)$ weak* converges to zero, there exists $j\in\en$ such that $x^*_j(y_k)<\ep$. Then we find $l>k$ such that $x_j^*(y_l)>c$. Then
\[
\norm{y_l-y_k}\ge x_j^*(y_l-y_k)>c-\ep.
\]
Hence $\cca{x_k}=\ca{y_k}\ge c$. Further, any subsequence of $(x_k)$ has the same properties, hence $\tcca{x_k}\ge c$. Therefore $\bs{A}\ge c$ and the proof is complete.
\end{proof}

Let us now complete the proof of the remaining inequality.

Assume first that $X$ is separable. Then $(B_{X^*},w^*)$ is metrizable, and thus angelic. 
By \cite[Theorem 6.1]{CKS}, $\gamma_0(A)=\wck{X}{A}$, and thus $\wck{X}{A}\le \bs A$ by the previous lemma.

Assume now that $X$ is arbitrary and $\wck{X}{A}>c$ for some $c>0$. Let $(x_k)$ be a sequence in $A$ with $\dd(\clu{X}{x_k},X)>c$.
Set $Y=\ov{\span}\{x_k\colon k\in\en\}$. Then $Y$ is a separable subspace of $X$ and $\dd(\clu{Y}{x_k},Y)>c$.

Indeed, let $y^{**}\in \clu{Y}{x_k}$ be arbitrary. Set
$x^{**}(x^*)=y^{**}(x^*|_Y)$, $x^*\in X^*$. Then $x^{**}\in\clu{X}{x_k}$ and for each $y\in Y$ we have due to the Hahn-Banach theorem
\[
\begin{aligned}
\norm{y^{**}-y}_{Y}&=\sup_{y^*\in B_{Y^*}} \abs{y^{**}(y^*)-y^*(y)}\\
&=\sup_{x^*\in B_{X^*}}\abs{(y^{**}(x^*|_Y)-x^*(y)}\\
&=\norm{x^{**}-y}_X.
\end{aligned}
\]
Therefore
$$\dd_{Y^{**}}(y^{**},Y)=\dd_{X^{**}}(x^{**},Y)\ge
\dd_{X^{**}}(x^{**},X),$$
hence
$$\dd(\clu{Y}{x_k},Y)\ge \dd(\clu{X}{x_k},X)>c.$$
Hence $\wck Y{A\cap Y}>c$, by the separable case we get
\[
\bs{A}\ge \bs{A\cap Y}\ge \wck{Y}{A\cap Y}>c,
\]
which concludes the proof of Theorem~\ref{P:wbs<bs<beta}.
\end{proof}

\begin{proof}[Proof of Example~\ref{EX:omega,bs}]
Let us fix $\alpha>0$ and consider an equivalent norm $|\cdot|_\alpha$ on $\ell_1$
given by the formula
$$|x|_\alpha=\max\{\alpha\|x\|_1,\|x\|_\infty\}$$
and let
$$X=\bigg(\oplus_{n=1}^\infty (\ell_1,|\cdot|_{1/n})\bigg)_{\ell_1}.$$
It is clear that $X$ is a separable Banach space. Further, $X$ has the Schur property as it is an $\ell_1$-sum of spaces with the Schur property
(this follows by a straightforward modification of the proof that $\ell_1$ has the Schur property, see \cite[Theorem~5.19]{fhhmpz}).

Further, let us define the following elements of $X$:
 $${x}^n_k=(0,\dots,0,\stackrel{n\text{-th}}{e_k},0,\dots),\qquad n,k\in\en,$$
where $e_k$ is the $k$-th canonical basic vector in $\ell_1$. Fix $n\in\en$ and set
$$A_n=\{{x}^n_k\colon k\in\en\}.$$
Since $\|{x}^n_k\|=1$ for  $k\in\en$, we get $A_n\subset B_X$. Further, $\|{x}^n_k-{x}^n_{k'}\|\ge 1$ whenever $k\ne k'$, so $\beta(A_n)\ge 1$ and hence $\chi(A_n)\ge \frac12$. Since $X$ has the Schur property, we get $\omega(A_n)=\chi(A_n)\ge\frac12$.

Let $(z_k)$ be an arbitrary sequence in $A_n$. If it has a constant subsequence, then $\tcca{z_k}=0$. Otherwise there is a one-to-one subsequence $(z_{k_i})$. It is clear that
$$\norm{\frac1m(z_{k_1}+\dots+z_{k_m})}=\norm{\frac1m({x}^n_1+\dots+{x}^n_m)}=\max\left\{\frac1n,\frac1m\right\} \mbox{\quad for }m\in\en,$$
hence $\cca{z_{k_i}}\le \frac2n$. So $\bs{A_n}\le\frac2n$.

This completes the proof.
\end{proof}

\section{Quantitative characterization of weak Banach-Saks sets}

It follows from the results summarized in \cite[Section 2]{LA} that the following assertions are equivalent
for a subset $A$ of a Banach space:
\begin{itemize}
	\item $A$ is a weak Banach-Saks set.
	\item Each weakly convergent sequence in $A$ admits a uniformly weakly convergent subsequence.
	\item No weakly convergent sequence in $A$ generates an $\ell_1$-spreading model.	
\end{itemize}

More precisely, in the quoted paper the authors formulate characterizations of Banach-Saks sets, adding to the other assertions the assumption that $A$ is relatively weakly compact. In this section we will prove a quantitative version of these characterizations. To formulate it, we need to introduce some natural quantities related to the above-mentioned properties:

Let $(x_k)$ be a sequence which weakly converges to some $x\in X$. This sequence is said to be \emph{uniformly weakly convergent} if
for each $\ep>0$
$$\exists n\in\en\ \forall x^*\in B_{X^*}\colon \#\{k\in\en\colon \abs{x^*(x_k-x)}>\ep\}\le n.$$
The quantity $\wu{x_k}$ is then defined to be the infimum of all $\ep>0$ satisfying this condition. Further, we set
\begin{eqnarray*}
\twu{x_k}&=&\inf\{\wu{x_{k_n}}\colon (x_{k_n})\text{ is a subsequence of }x_k\}.
\end{eqnarray*}
Finally, for a bounded set $A$ we define
\begin{eqnarray*}
\swu{A}&=&\sup\{\twu{x_k}\colon (x_k)\subset A\text{ is weakly convergent}\}.
\end{eqnarray*}

We continue by a definition related to spreading models.
Let $(x_k)$ be a bounded sequence. We say that it \emph{generates an $\ell_1$-spreading model with $\delta>0$} if
\[
\forall F\subset \en, \#F\le\min F\; \forall (\alpha_i)_{i\in F}\in\er^F\colon \norm{\sum_{i\in F}\alpha_ix_i}\ge \delta\sum_{i\in F} \abs{\alpha_i}.
\]
The sequence $(x_k)$ \emph{generates an $\ell_1$-spreading model} if it generates an $\ell_1$-spreading model with some $\delta>0$.

For a bounded set $A$ we set
\[
 \begin{aligned}
\sm{A}=\sup\{\delta>0\colon
&\exists (x_k)\subset A, x_k\overset{w}{\rightarrow} x,\\
& (x_k-x)\text{ generates an $\ell_1$-spreading model with $\delta$} \}.
\end{aligned}
\]

Now we are ready to formulate the promised quantitative characterizations.

\begin{thm}
\label{P:wbs<2wu<4sm}
Let $A$ be a bounded set in a Banach space $X$. Then
\begin{equation}\label{E:wbs<wu<sm}
\sm{A}\le \frac12\wbs{A}\le \swu{A}\le 2\sm{A}.
\end{equation}
\end{thm}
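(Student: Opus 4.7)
I prove the three inequalities $\sm{A} \le \tfrac{1}{2}\wbs{A} \le \swu{A} \le 2\sm{A}$ in turn.

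For $\wbs{A} \le 2\swu{A}$: take $c > \swu{A}$ and an arbitrary weakly convergent $(x_k) \subset A$ with limit $x$. Then $\twu{x_k} < c$, so some subsequence $(x_{k_n})$ satisfies $\#\{n : |x^*(x_{k_n} - x)| > c\} \le N$ uniformly in $x^* \in B_{X^*}$. Splitting the Ces\`aro average $u_m = \tfrac{1}{m}\sum_{i=1}^m x_{k_i}$ into the at most $N$ ``bad'' terms and the rest yields $|x^*(u_m - x)| \le \tfrac{1}{m}(N \cdot 2\sup_k\|x_k - x\| + (m - N)c) \to c$, so $\|u_m - x\| \le c + o(1)$ and $\cca{x_{k_n}} \le 2c$; hence $\tcca{x_k} \le 2c$ and $\wbs{A} \le 2c$.

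For $\swu{A} \le 2\sm{A}$: take $c < \swu{A}$, so some weakly convergent $(x_k) \subset A$ has every subsequence failing uniform weak convergence at level $c$. By an iterative diagonal extraction combined with a Ramsey-type stabilisation of signs, I produce a subsequence $(y_{k_n})$ of $y_n = x_n - x$ and signs $\sigma_n \in \{-1,+1\}$ so that for each $n$ there is $x^*_n \in B_{X^*}$ with $\sigma_i x^*_n(y_{k_i}) > c$ for all $i \le n$. Testing against $x^*_{\max F}$ gives a one-sided $\ell_1$-spreading estimate with constant $c$; splitting a general coefficient vector into its positive and negative parts and applying the one-sided estimate to each yields an $\ell_1$-spreading model for $(y_{k_n})$ with constant $c/2$, so $\sm{A} \ge c/2$.

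The main technical inequality is $\sm{A} \le \tfrac{1}{2}\wbs{A}$. Given $(x_k) \subset A$ weakly convergent to $x$ with $(y_k) = (x_k - x)$ generating an $\ell_1$-spreading model of constant $\delta$, I aim to show $\ca{u_m} \ge 2\delta$ for the Ces\`aro averages $u_m = \tfrac{1}{m}\sum_{i=1}^m x_{k_i}$ of any subsequence (each of which inherits the spreading model constant $\delta$). Applying the spreading model to the Schreier block $F = \{n+1,\ldots,2n\}$ gives $\|\tfrac{1}{n}\sum_{i=n+1}^{2n} y_i\| \ge \delta$, which rewrites as $\|2u_{2n} - u_n - x\| \ge \delta$. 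For $m \gg n$ one writes $u_m - u_n = \tfrac{1}{m}\sum_{i=n+1}^m y_i - \tfrac{m-n}{m}(u_n - x)$: the head part converges in norm to $-(u_n - x)$, whose norm is bounded below by combining the spreading-model estimate with weak lower semicontinuity of the norm (using $u_m - x \to 0$ weakly), while the tail part is weakly null with asymptotic norm bounded below by the spreading model applied to appropriate Schreier sub-blocks. The crucial step, and main obstacle, is showing that these head and tail contributions align asymptotically rather than cancel, yielding $\|u_m - u_n\| \ge 2\delta - \varepsilon$ for suitable $m \gg n$ large. This alignment is achieved by constructing a witnessing functional via Hahn--Banach applied to the spreading model on a large tail block and using weak$^*$-compactness (or a further Ramsey extraction) to align its action simultaneously with the head and tail contributions; once established, $\ca{u_m} \ge 2\delta$ follows, giving $\wbs{A} \ge 2\delta$.
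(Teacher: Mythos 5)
Your middle inequality $\wbs{A}\le 2\swu{A}$ is correct and is essentially the paper's argument (the estimate $\cca{x_k}\le 2\wu{x_k}$ for weakly convergent sequences, applied to a subsequence almost attaining $\twu{x_k}$). The other two inequalities, however, each have a genuine gap.

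For $\sm{A}\le\frac12\wbs{A}$ you have located the obstacle but not removed it, and there is a second obstacle you do not mention. The Schreier condition $\#F\le\min F$ makes the block $\{n+1,\dots,m\}$ admissible only for $m\le 2n+1$, and in that regime the total absolute coefficient mass of $u_m-u_n$ over the block is $2(1-\tfrac nm)\le 1+\tfrac1{2n+1}$; so a direct application of the spreading-model lower bound caps out at $\norm{u_m-u_n}\gtrsim\delta$, i.e.\ at $\wbs{A}\ge\sm{A}$, a factor of $2$ short. Your proposed repair --- making the head $-\tfrac{m-n}{m}(u_n-x)$ and the tail $\tfrac1m\sum_{i=n+1}^m y_{k_i}$ ``align'' via a Hahn--Banach functional norming the tail, plus weak$^*$-compactness or Ramsey extraction --- is not a proof: a functional norming the tail block has no reason to be large on the head, and no compactness or stabilisation argument forces it to be. The paper's resolution is elementary and avoids alignment entirely: first pass to the subsequence $(x_{k^3})$, so that every further subsequence $(z_k)$ satisfies the $\ell_1$-lower estimate for all $F$ with $\#F\le(\min F)^3$; then with $n=N+N^2$ and $m=N+N^3$ the single set $F=\{N+1,\dots,m\}$ is admissible and contains head and tail together, and applying the lower estimate to the mixed-sign coefficient vector (values $\tfrac1m-\tfrac1n$ on $\{N+1,\dots,n\}$ and $\tfrac1m$ on $\{n+1,\dots,m\}$, of total absolute mass tending to $2$) yields $\norm{u_m-u_n}\ge 2\delta-o(1)$ in one stroke, since the spreading-model estimate already handles arbitrary signs.

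For $\swu{A}\le 2\sm{A}$ the step ``splitting a general coefficient vector into its positive and negative parts and applying the one-sided estimate to each'' does not work. If $x^*$ witnesses the one-sided estimate on $F^+=\{i\in F:\alpha_i>0\}$, then
$x^*\bigl(\sum_{i\in F}\alpha_iy_{k_i}\bigr)\ge c\sum_{i\in F^+}\alpha_i-\sum_{i\in F^-}\abs{\alpha_i}\,\abs{x^*(y_{k_i})}$,
and the subtracted sum can swamp the main term because you have no control on $\abs{x^*(y_{k_i})}$ off $F^+$; the triangle inequality only gives $\norm{\sum_F}\ge\bigl|\norm{\sum_{F^+}}-\norm{\sum_{F^-}}\bigr|$, which can vanish. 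Your functionals $x^*_n$, being uniformly large on an entire initial segment, are in fact the worst possible choice for the negative coefficients. What the argument requires is a functional that is large on $F^+$ \emph{and} has small $\ell_1$-mass outside $F^+$; for weakly null sequences this is exactly the convex unconditionality lemma of Argyros--Mercourakis--Tsarpalias \cite{AMT}, and it is the real content of this implication. The paper applies that lemma, and then a Ramsey dichotomy for the hereditary family of sets on which some functional exceeds $c$, to pass to a subsequence for which every Schreier-admissible set lies in that family; only then does the positive/negative splitting close, at the cost of the factor $\tfrac12$.
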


\begin{remark} For any bounded set $A\subset X$  have
\begin{equation}\label{eq:phi}
\sm{A}\le \frac12\varphi'(A)\le \wbs{A}.
\end{equation}
Indeed, it is easy to check that $\cca{x_k}\ge\frac12\asep(x_k)$ for any bounded sequence $(x_k)$. Since the quantity $\asep$ cannot decrease when we pass to a subsequence, we get $\tcca{x_k}\ge\frac12\asep(x_k)$, hence the second inequality in \eqref{eq:phi} follows. The first inequality follows from Lemma~\ref{l:phi} below.

Hence, by \eqref{eq:phi} and Theorem~\ref{P:wbs<2wu<4sm} the quantity $\varphi'$ inspired by \cite{kr12}
is equivalent to our quantities. Further, since clearly $\varphi'\le\varphi$, we get
$$\sm{A}\le\frac12\varphi'(A)\le \frac12\varphi(A)\le \bs{A}.$$
The last inequality follows from the previous paragraph. It seems not to be clear whether the quantity $\varphi$ is equivalent to $\bs{ }$ also for sets which are not relatively weakly compact.
\end{remark}

As a consequence of Theorem~\ref{P:wbs<2wu<4sm} we get that the introduced quantities $\wbs{}$ and $\bs{}$ really measure the failure
of the weak Banach-Saks (Banach-Saks, respectively) property of a set.

\begin{cor}\label{c:kvalit}
Let $A$ be a bounded set in a Banach space $X$.
\begin{itemize}
	\item If $\wbs{A}=0$, then $A$ is a weak Banach-Saks set.
	\item If $\bs{A}=0$, then $A$ is a Banach-Saks set.
\end{itemize}
\end{cor}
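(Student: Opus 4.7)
The plan is to deduce both bullets from the two theorems already established in this section, together with the elementary fact that a uniformly weakly convergent sequence is Ces\`aro norm convergent to its weak limit. I would prove the first bullet first, since the second reduces cleanly to it.

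For the first bullet, assume $\wbs{A}=0$. Theorem~\ref{P:wbs<2wu<4sm} forces $\swu{A}=0$, so for any weakly convergent sequence $(x_k)\subset A$, with weak limit $x$, one has $\twu{x_k}=0$. I would then extract a subsequence $(y_k)$ with $\wu{y_k}=0$ by a nested-subsequence plus diagonal argument, choosing the $j$-th nested subsequence to satisfy $\wu{}<1/j$. Unlike the analogous attempt at the level of $\cca$ versus $\tcca$, here the diagonal behaves well: $\wu{}$ does not increase when passing to a subsequence and is unaffected by modifying finitely many terms, so the diagonal sequence inherits $\wu{y_k}\le 1/j$ for every $j$, hence $\wu{y_k}=0$. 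Finally, I would verify by a direct splitting argument that uniformly weakly convergent sequences are Ces\`aro norm convergent to the weak limit: given $\ep>0$, the defining index $n$ from $\wu{y_k}<\ep$ bounds the number of "bad" indices uniformly in $x^*$, so $\bigl|x^*\bigl(\tfrac1m\sum_{i=1}^m y_i-x\bigr)\bigr|\le \tfrac{nM}{m}+\ep$ with $M=\sup_i\|y_i-x\|$; taking the supremum over $x^*\in B_{X^*}$ and letting $m\to\infty$ yields $\bigl\|\tfrac1m\sum_{i=1}^m y_i-x\bigr\|\to 0$.

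For the second bullet, assume $\bs{A}=0$. Theorem~\ref{P:wbs<bs<beta} yields $\wbs{A}=0$ and $\wck{X}{A}=0$. The former, combined with the already-proved first bullet, says $A$ is weak Banach-Saks. The latter means that for every sequence $(x_k)\subset A$ each weak$^*$-cluster point in $X^{**}$ lies in $X$; since any bounded sequence admits weak$^*$-cluster points in $X^{**}$, the Eberlein--\v Smulyan theorem then forces $A$ to be relatively weakly compact. Given any sequence in $A$, I would extract a weakly convergent subsequence using relative weak compactness and then a Ces\`aro convergent sub-subsequence using the weak Banach-Saks property, concluding that $A$ is Banach-Saks.

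The proof is essentially mechanical given the two theorems of this section and the classical Eberlein--\v Smulyan theorem. The only step requiring care is the diagonal passage from $\twu{x_k}=0$ to a subsequence with $\wu{}=0$, and this works only because, unlike $\cca$, the quantity $\wu{}$ is monotone under taking subsequences.
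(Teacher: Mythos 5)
Your argument is correct and follows essentially the same route as the paper: Theorem~\ref{P:wbs<2wu<4sm} gives $\swu{A}=0$, a nested-subsequence plus diagonal extraction yields a subsequence with $\wu{}=0$ (your remark that $\wu{}$ is monotone under passing to subsequences and insensitive to finitely many terms is exactly why the diagonal works here, unlike for $\cca{}$), and your splitting estimate is precisely the content of Lemma~\ref{L:ccawu}, which the paper cites instead of reproving; the second bullet is then handled as in the paper via Theorem~\ref{P:wbs<bs<beta} and the first bullet.

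One small correction in the second bullet: by definition $\wck{X}{A}$ is a supremum over sequences of $\dd(\clu{X}{x_k},X)$, and $\dd$ is the \emph{infimum} of distances between the two sets, so $\wck{X}{A}=0$ does not literally say that every weak$^*$ cluster point of every sequence lies in $X$ (that would correspond to $\dh$ rather than $\dd$). The conclusion you need is nevertheless true: by the inequality $\wk{A}\le 2\wck{X}{A}$ quoted in the preliminaries, $\wck{X}{A}=0$ forces $\wk{A}=\dh(\wscl{A},X)=0$, and since $X$ is norm-closed in $X^{**}$ this gives $\wscl{A}\subset X$, i.e.\ $A$ is relatively weakly compact; after that, your use of Eberlein--\v{S}mulyan to extract weakly convergent subsequences and conclude that $A$ is Banach-Saks is fine.
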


To prove the corollary we will use the following lemma, which also proves the inequality $\wbs{A}\le 2\swu{A}$ from Theorem~\ref{P:wbs<2wu<4sm}.

\begin{lemma}\label{L:ccawu}
Let $(x_k)$ be a sequence in a Banach space $X$ which weakly converges to some $x\in X$. Then $\cca{x_k}\le 2\wu{x_k}$.
\end{lemma}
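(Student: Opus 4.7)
The plan is to reduce to the case of weak limit zero and then to dualize via Hahn--Banach, converting the pointwise (in $x^*$) but uniformly counted condition built into $\wu{x_k}$ into a norm estimate on the Ces\`aro averages.

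First I would note that both $\cca{x_k}$ and $\wu{x_k}$ are invariant under the shift $x_k \mapsto x_k - x$: the Ces\`aro averages transform as $\frac{1}{m}\sum_{i=1}^m (x_i - x) = y_m - x$ where $y_m = \frac{1}{m}\sum_{i=1}^m x_i$, and $\ca{\cdot}$ depends only on differences; while the defining condition of $\wu{\cdot}$ is already phrased in terms of $x^*(x_k - x)$. So without loss of generality the sequence weakly converges to $0$.

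Next, fix any $\ep > \wu{x_k}$. The definition furnishes some $n_0 \in \en$, uniform over $x^* \in B_{X^*}$, such that for every such $x^*$ at most $n_0$ indices $k$ satisfy $|x^*(x_k)| > \ep$. Setting $M = \sup_k \|x_k\|$, I would split $\sum_{i=1}^m x^*(x_i)$ into the at most $n_0$ ``bad'' indices (total contribution bounded by $n_0 M$) and the ``good'' remainder (contribution bounded by $m\ep$) to obtain $|x^*(y_m)| \leq \frac{n_0 M}{m} + \ep$. Because $n_0$ does not depend on $x^*$, taking the supremum over $x^* \in B_{X^*}$ yields $\|y_m\| \leq \frac{n_0 M}{m} + \ep$, hence $\|y_m - y_l\| \leq \frac{n_0 M}{m} + \frac{n_0 M}{l} + 2\ep$, and therefore $\sup_{k,l \geq n}\|y_k - y_l\| \to 2\ep$ as $n \to \infty$. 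Thus $\cca{x_k} = \ca{y_m} \leq 2\ep$, and sending $\ep \to \wu{x_k}$ gives the claim.

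The crux --- and the only non-routine point --- is the uniformity of $n_0$ over the dual unit ball, which is what allows the pointwise bound on $|x^*(y_m)|$ to be promoted, via Hahn--Banach, to a norm bound on $y_m$. Without this uniform control the constant $n_0$ would depend on $x^*$ and could not be pulled through the supremum. Once this is identified, the rest is a simple two-part counting estimate and I do not anticipate any further obstacle.
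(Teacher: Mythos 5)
Your proof is correct and follows essentially the same route as the paper's: the uniform bound $n_0$ over $B_{X^*}$ feeds a bad/good counting estimate which is promoted to a norm bound by taking the supremum over the dual ball, with the factor $2$ coming from comparing two averages. The only cosmetic difference is that you translate to weak limit $0$ and bound each Ces\`aro average separately before applying the triangle inequality, whereas the paper performs the same counting estimate directly on $\abs{x^*(z_m-z_n)}$ via the decomposition $x^*(z_m-x)+x^*(x-z_n)$.
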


\begin{proof} Let $M=\sup\limits_k \norm{x_k}$. Fix an arbitrary $c>\wu{x_k}$. Let $N\in\en$ be such that
\begin{equation}\label{E:N}
\forall x^*\in B_{X^*}\colon \#\{k\in\en\colon \abs{x^*(x_k-x)}>c\}\le N.
\end{equation}
Set $z_k=\frac1k(x_1+\cdots+x_k)$, $k\in\en$. Given $\ep>0$, we find $n_0\in\en$ such that $\frac{2MN}{n_0}<\ep$.

Now, for any couple of indices $n_0\le n<m$ and $x^*\in B_{X^*}$ we obtain from \eqref{E:N}
\[
\begin{aligned}
\abs{x^*(z_m-z_n)}&=\abs{x^*(z_m-x)+x^*(x-z_n)}\\
&=\abs{\left(\frac1m-\frac1n\right)\left(x^*(x_1-x)+\cdots +x^*(x_n-x)\right)\right.\\ &\qquad\left.+ \frac1m\left(x^*(x_{n+1}-x)+\cdots+x^*(x_m-x)\right)}\\
&\le \frac{2NM}{n}+c\left(n\left(\frac1n-\frac1m\right)+\frac{m-n}{m}  \right)\\
&\le \ep+c\left(2-2\frac{n}{m}   \right)\\
&\le 2c+\ep.
\end{aligned}
\]
Since $x^*\in B_{X^*}$ is arbitrary,
\[
\norm{z_m-z_n}\le 2c+\ep
\]
for each $n_0\le n<m$. Thus $\cca{x_k}=\ca{z_k}\le 2c$, which completes the proof.
\end{proof}

\begin{proof}[Proof of Corollary~\ref{c:kvalit}.]
Suppose that $\wbs{A}=0$. By Theorem~\ref{P:wbs<2wu<4sm} we deduce $\swu{A}=0$. Let $(x_k)$ be a weakly convergent sequence in $A$.
Then we can construct by induction sequences $(y_k^n)_{k=1}^\infty$ for $n\in\en$ such that
\begin{itemize}
	\item $y_k^1=x_k$, $k\in\en$;
	\item $(y_k^{n+1})$ is a subsequence of $(y_k^n)$;
	\item $\wu{y_k^{n+1}}<\frac1{n+1}$.
\end{itemize}
Consider the diagonal sequence $(z_k)=(y_k^k)$. Then $(z_k)$ is a subsequence of $(x_k)$, $\wu{z_k}=0$ and hence $\cca{z_k}=0$ (by Lemma~\ref{L:ccawu}),
so $(z_k)$ is Ces\`aro convergent. This completes the proof that $A$ is a weak Banach-Saks set.

For the second part, suppose that $\bs{A}=0$. Hence $\wbs{A}=0$, so by the first part, $A$ is a weak Banach-Saks set. Further, by Theorem~\ref{P:wbs<bs<beta} we get $\wck XA=0$, hence $A$ is relatively weakly compact. Therefore $A$ is a Banach-Saks set.
\end{proof}

We continue with the inequality $2\sm{A}\le \wbs{A}$. It follows immediately from the following lemma.

\begin{lemma}\label{L:sm} Let $(x_k)$ be a bounded sequence in a Banach space $X$ and $x\in X$. Suppose that  $(x_k-x)$ generates an $\ell_1$-spreading model with a constant $c$. Then $\tcca{x_{k^3}}\ge 2c$.
\end{lemma}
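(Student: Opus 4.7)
The plan is to fix an arbitrary subsequence $(x_{n_j^3})_j$ of $(x_{k^3})_k$, with $n_1<n_2<\cdots$ (so that $n_j\ge j$ and the original indices $n_j^3$ grow at least like $j^3$), and to prove directly that the Ces\`aro averages $Z_l=\tfrac1l\sum_{j=1}^l x_{n_j^3}$ satisfy $\ca{Z_l}\ge 2c$; taking the infimum over such subsequences then yields $\tcca{x_{k^3}}\ge 2c$. Setting $y_k=x_k-x$, the $x$'s cancel and for $l<r$ one can write $Z_l-Z_r$ as a linear combination of $y_{n_j^3}$, $j=1,\dots,r$, with coefficients $\tfrac1l-\tfrac1r$ for $j\le l$ and $-\tfrac1r$ for $l<j\le r$; the absolute coefficients sum to $2(1-l/r)$.

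If the $\ell_1$-spreading-model inequality could be applied directly to the index set $F=\{n_1^3,\dots,n_r^3\}$ in $(y_k)$, one would immediately obtain $\|Z_l-Z_r\|\ge 2c(1-l/r)$, which is close to $2c$ when $l\ll r$. The main obstacle is precisely the constraint $\#F\le\min F$: here $\min F=n_1^3$ can be as small as $1$ while $\#F=r$ is forced to be large. This is exactly the reason for indexing by cubes: after discarding a short initial portion of the sum, the surviving indices will be so large that the spreading-model condition becomes automatic.

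Concretely, given $n\in\en$ and $\varepsilon>0$, pick $n_0\ge n$ large and take $l=n_0^2$, $r=n_0^3$. The contribution of the first $n_0$ summands to $Z_l-Z_r$ is $(\tfrac1l-\tfrac1r)\sum_{j=1}^{n_0}y_{n_j^3}$, of norm at most $n_0M/l=M/n_0$, where $M=\sup_k\|y_k\|$. The remaining ``bulk'' is supported on the indices $n_{n_0+1}^3,\dots,n_r^3$, whose minimum is $\ge(n_0+1)^3$, while their number $r-n_0=n_0^3-n_0$ satisfies $r-n_0\le(n_0+1)^3$; so the spreading-model inequality now applies. A short computation shows that the sum of the absolute coefficients in the bulk equals $(1-\tfrac1{n_0})(2-\tfrac1{n_0})$, so the bulk has norm at least $c(1-\tfrac1{n_0})(2-\tfrac1{n_0})$.

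Combining these estimates via the triangle inequality yields
\[
\|Z_l-Z_r\|\ge c\Bigl(1-\tfrac1{n_0}\Bigr)\Bigl(2-\tfrac1{n_0}\Bigr)-\tfrac{M}{n_0},
\]
which exceeds $2c-\varepsilon$ for $n_0$ sufficiently large, while at the same time $l,r\ge n_0\ge n$. Hence $\ca{Z_l}\ge 2c$, which completes the argument.
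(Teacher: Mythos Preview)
Your proof is correct and follows essentially the same approach as the paper's: both fix an arbitrary subsequence of $(x_{k^3})$, choose two Ces\`aro-mean indices of the form (roughly) $N^2$ and $N^3$, discard the first $N$ summands by a crude triangle-inequality bound, and then apply the spreading-model inequality to the remaining bulk, whose index set satisfies $\#F\le\min F$ because the original indices grow at least like $j^3$. The only cosmetic difference is the parametrization (the paper uses $n=N+N^2$, $m=N+N^3$ while you use $l=n_0^2$, $r=n_0^3$), but the key idea and the limiting computation are identical.
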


\begin{proof}
Since $\cca{x_k-x}=\cca{x_k}$, we may without loss of generality suppose that $x=0$. 
Let $M=\sup_k\|x_k\|$.
Let $(z_k)$ be any subsequence of $(x_{k^3})$. We will show that $\cca{z_k}\ge 2c$. To this end we notice that, for $F\subset \en$ satisfying $\#F\le (\min  F)^3$, we have $\norm{\sum_{i\in F}\alpha_iz_i}\ge c\sum_{i\in F}\abs{\alpha_i}$ whenever $(\alpha_i)$ are  arbitrary scalars.

Let $N\in\en$ be given. We set $n=N+N^2$ and $m=N+N^3$. Then the set $F=\{N+1,\dots, m\}$ satisfies $\#F\le (\min F)^3$, which implies
\[
\begin{aligned}
\left\|\frac1m\left(z_1\right.\right.&\left.\left.+\cdots+z_m\right)-\frac1n\left(z_1+\cdots+z_n\right)\right\|\\
&=\norm{\left(\frac1m-\frac1n\right)(z_1+\cdots+z_n)+\frac1m(z_{n+1}+\cdots +z_m)}\\
&\ge \norm{\left(\frac1m-\frac1n\right)(z_{N+1}+\cdots+z_n)+\frac1m(z_{n+1}+\cdots+z_m)}
\\&\qquad\qquad-\norm{\left(\frac1m-\frac1n\right)(z_1+\cdots+z_N)}\\
&\ge c\left(\frac1n-\frac1m\right)(n-N)+\frac{c}{m}(m-n)-MN\left(\frac1n-\frac1m\right)\\
&=c\frac{N^2(N^3-N^2)}{(N+N^3)(N+N^2)}+c\frac{N^3-N^2}{N^3+N}-M\frac{N(N^3-N^2)}{(N+N^3)(N+N^2)}.
\end{aligned}
\]
Since the last term converges to $2c$ as $N$ tends to infinity, $\cca{z_k}\ge 2c$, which completes the proof.
\end{proof}

The following lemma provides the promised proof of the first inequality in \eqref{eq:phi}. It is an easier variant of the previous lemma.

\begin{lemma}\label{l:phi} Let $(x_k)$ be a bounded sequence in a Banach space $X$ and $x\in X$. Suppose that  $(x_k-x)$ generates an $\ell_1$-spreading model with a constant strictly greater than $c$. Then there is $n\in\en$ such that
$\asep((x_{k^2})_{k\ge n})>2c$.
\end{lemma}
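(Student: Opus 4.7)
Since every pair $(F,H)$ in the definition of $\asep$ satisfies $\#F = \#H$, that quantity is translation-invariant, so I may pass from $(x_k)$ to $(x_k - x)$ and assume $x = 0$. Fix $c'$ with $c < c'$ such that $(x_k)$ still generates an $\ell_1$-spreading model with constant $c'$, and set $M := \sup_k \|x_k\|$. My goal is to produce $n \in \en$ such that for every $F, H \subset \{n, n+1, \ldots\}$ with $\#F = \#H = p$ and $\max F < \min H$,
\[
\left\|\frac{1}{p}\left(\sum_{k \in F} x_{k^2} - \sum_{k \in H} x_{k^2}\right)\right\| > 2c.
\]
The key observation is that the original-index set $G := \{k^2 : k \in F \cup H\}$ has $\#G = 2p$ and $\min G = (\min F)^2 \ge n^2$. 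In the easy regime $2p \le n^2$, the $\ell_1$-spreading-model property of $(x_k)$ applied to $G$ with signs $+1$ on $\{k^2 : k \in F\}$ and $-1$ on $\{k^2 : k \in H\}$ immediately yields a numerator lower bound of $2c'p$, so the average exceeds $2c' > 2c$.

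The substantive case is $2p > n^2$, where $\#G \le \min G$ fails. I split each of $F$ and $H$ at the threshold $k = \sqrt{2p}$: set $F_1 := \{k \in F : k < \sqrt{2p}\}$, $F_2 := F \setminus F_1$, and analogously $H = H_1 \sqcup H_2$. The tail $F_2 \cup H_2$ lies in $\{k \ge \sqrt{2p}\}$, so $\{k^2 : k \in F_2 \cup H_2\}$ satisfies $\# \le 2p \le \min$, and the $\ell_1$-spreading model gives
\[
\left\|\sum_{k \in F_2} x_{k^2} - \sum_{k \in H_2} x_{k^2}\right\| \ge c'(\#F_2 + \#H_2) \ge c'(2p - 2\sqrt{2p}),
\]
using that $\#F_1, \#H_1 \le \sqrt{2p}$. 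The head is absorbed via the triangle inequality by $M(\#F_1 + \#H_1) \le 2M\sqrt{2p}$, so after division by $p$ I obtain the lower bound $2c' - 2\sqrt{2}(c' + M)/\sqrt{p}$, which exceeds $2c$ once $p$ crosses an explicit threshold $p_0 = p_0(c, c', M)$.

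Choosing $n$ so that $n^2 \ge 2p_0$ makes the two regimes cover every admissible $p$: the easy regime handles $p \le n^2/2$, and the splitting regime handles $p > n^2/2 \ge p_0$, yielding $\asep((x_{k^2})_{k \ge n}) > 2c$. The main obstacle is precisely the large-$p$ regime: the spreading-model condition $\#G \le \min G$ cannot be enforced on the whole of $F \cup H$, and one must isolate the prefix of $F \cup H$ of size at most $2\sqrt{2p}$ where the condition fails and absorb it as a norm error of order $\sqrt{p}$ against the spreading-model lower bound of order $p$ on the tail. Compared to Lemma~\ref{L:sm}, this is genuinely lighter bookkeeping, since $\asep$ already averages over equal-sized blocks and does not require matching particular Cesàro tails.
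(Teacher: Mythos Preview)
Your proof is correct and follows essentially the same route as the paper's: reduce to $x=0$, handle the small-$p$ case directly via the spreading-model inequality, and in the large-$p$ case peel off the short initial segment where $k^2<2p$ and absorb it as an $O(\sqrt{p})$ error against the $O(p)$ spreading-model lower bound on the tail. The only cosmetic difference is that the paper observes (implicitly) that the second block $H$ never needs splitting: since $\min H>\max F\ge n+p-1\ge p\ge\sqrt{2p}$ for $p\ge2$, your set $H_1$ is automatically empty, so splitting $H$ is harmless but redundant and accounts for the slightly looser constant in your estimate.
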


\begin{proof} Fix $d>c$ such that  $(x_k-x)$ generates an $\ell_1$-spreading model with the constant $d$. Let $M=\sup_k\norm{x_k}$.
Fix $n\in\en$ such that $n\ge2$ and 
$$2d-\sqrt{\frac2n}(d+2M)>2c.$$
If we show that 
$$\asep((x_{k^2})_{k\ge n})\ge 2d-\sqrt{\frac2n}(d+2M),$$
the proof will be completed. To do that fix any $m\in\en$ and indices 
$$n\le p_1<p_2<\dots<p_m<q_1<q_2<\dots< q_m.$$
If $m\le\frac12n^2$, then $2m\le n^2$ and therefore
$$\norm{\frac1m\left(\sum_{i=1}^m x_{p_i^2}-\sum_{i=1}^m x_{q_i^2}\right)}
=\norm{\frac1m\left(\sum_{i=1}^m (x_{p_i^2}-x)-\sum_{i=1}^m (x_{q_i^2}-x)\right)}
\ge d\cdot \frac1m\cdot 2m =2d.$$
Finally, suppose that $m>\frac12n^2$. Let $j\in\{1,\dots,m\}$ be the smallest number satisfying
$p_j^2\ge 2m$. Such a number exists since $p_m^2\ge m^2\ge 2m$ (as $m>\frac12n^2\ge2$). Moreover, necessarily $j\le \sqrt{2m}+1$. Indeed, if $p_i^2<2m$, then $i^2\le p_i^2<2m$ and hence $i<\sqrt{2m}$. We have
$$\begin{aligned}
\left\|\frac1m\left(\sum_{i=1}^m x_{p_i^2}\right.\right.&-\left.\left.\sum_{i=1}^m x_{q_i^2}\right)\right\|
=\norm{\frac1m\left(\sum_{i=1}^m (x_{p_i^2}-x)-\sum_{i=1}^m (x_{q_i^2}-x)\right)}\\
&\ge\norm{\frac1m\left(\sum_{i=j}^m (x_{p_i^2}-x)-\sum_{i=1}^m (x_{q_i^2}-x)\right)}-
\norm{\frac1m\sum_{i=1}^{j-1} (x_{p_i^2}-x)}\\
&\ge d\cdot \frac1m\cdot(2m-\sqrt{2m})-\frac1m\cdot 2M \cdot\sqrt{2m}
\\&=2d- \sqrt{\frac2m}(d+2M)\ge 2d- \sqrt{\frac2n}(d+2M).
\end{aligned}$$
This completes the proof.
\end{proof}

Finally we will show $\swu{A}\le 2\sm{A}$. This follows from the following lemma. The lemma essentially follows from \cite[Theorem 2.1]{LA}. However, since this theorem is not proved in \cite{LA} and we were not able to completely
recover it from the references therein and since we, moreover, need to know precise constants, we decided to give
here a complete proof of the lemma using the results of \cite{AMT,LA-T}.

\begin{lemma}\label{L:ramsey} Let $c>0$ and let $(x_k)$ be a weakly null sequence with $\twu{x_k}>c$. Then there is a subsequence of $(x_k)$ generating an $\ell_1$-spreading model with the  constant $\frac c2$. \end{lemma}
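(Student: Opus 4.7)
The plan is to combine a Hahn--Banach dual characterization of the $\ell_1$-spreading model property with a Ramsey-type subsequence extraction based on the framework of \cite{AMT,LA-T}, exploiting crucially the weak nullity of $(x_k)$. First I would record the following dual reformulation, obtained by routine Hahn--Banach separation: a sequence $(y_i)$ generates an $\ell_1$-spreading model with constant $\delta$ if and only if for every $F\subset\en$ with $\#F\le\min F$ and every sign pattern $\sigma\in\{-1,+1\}^F$ there exists $x^*\in B_{X^*}$ with $\sigma_i x^*(y_i)\ge\delta$ for all $i\in F$. This reduces the lemma to producing a subsequence $(z_k)$ of $(x_k)$ satisfying this sign-sensitive separation at level $\delta=c/2$.

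Next I would unfold the hypothesis. Since $\twu{x_k}>c$, every subsequence $(y_k)$ of $(x_k)$ has $\wu{y_k}>c$, so for each $n\in\en$ there exist $x^*\in B_{X^*}$ and $G\subset\en$ with $\#G\ge n$ and $|x^*(y_k)|>c$ for $k\in G$. A pigeonhole on signs (possibly replacing $x^*$ by $-x^*$) yields, at the cost of halving the size of $G$, a functional for which $x^*(y_k)>c$ holds on a subset of size $\lceil\#G/2\rceil$.

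The core technical step is a Ramsey-type extraction drawn from \cite{AMT,LA-T} producing a subsequence $(z_k)$ with the following uniform property: for every $F\subset\en$ with $\#F\le\min F$, every bipartition $F=F_+\sqcup F_-$, and every $\ep>0$, there exist $x^*_+,x^*_-\in B_{X^*}$ such that $x^*_+(z_i)>c$ for $i\in F_+$ and $|x^*_+(z_j)|<\ep$ for $j\in F_-$, and symmetrically for $x^*_-$. Such an extraction interleaves the sign stabilization above with the weak nullity of $(x_k)$, which forces the values of any already fixed functional to decay along tails; it is organized by a diagonal / Galvin--Prikry argument on the countable family of Schreier-type finite sets. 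Given this, for any $F$ with $\#F\le\min F$ and any $\sigma\in\{-1,+1\}^F$, I set $F_\pm=\sigma^{-1}(\pm 1)$ and $x^*=\tfrac12(x^*_+-x^*_-)\in B_{X^*}$; a short estimate then gives $\sigma_i x^*(z_i)>(c-\ep)/2$ on all of $F$, and letting $\ep\to 0$ produces an $\ell_1$-spreading model with constant $c/2$ via the dual characterization.

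The main obstacle is precisely the Ramsey extraction just sketched: one must arrange, simultaneously over all finite bipartitions of arbitrary Schreier-type sets, separating functionals with both a uniform lower bound $c$ on the ``correct'' side and arbitrarily small cross-values on the ``wrong'' side. Balancing these requirements against the constraint $\#F\le\min F$ is exactly the setting handled by the compact adequate families of \cite{AMT} and the selection lemma of \cite{LA-T}; without their machinery, such a uniform witness family is hard to organize directly. The factor $1/2$ in the conclusion enters through the averaging $\tfrac12(x^*_+-x^*_-)$ at the last step and appears unavoidable in this approach.
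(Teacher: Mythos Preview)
Your plan is essentially the paper's own proof, resting on the same machinery from \cite{AMT} and \cite{LA-T}. The paper fixes $\delta>0$ with $\twu{x_k}>c+3\delta$, applies \cite[Lemma~2.4.7]{AMT} to the weakly compact set $\{(x^*(x_k))_k:x^*\in B_{X^*}\}\subset c_0$ to pass to a subsequence on which every finite $F$ witnessed at level $c+3\delta$ admits $y^*\in B_{X^*}$ with $y^*(x_k)>c+2\delta$ on $F$ and $\sum_{k\notin F}|y^*(x_k)|<\delta$, and then a short Ramsey dichotomy (Lemma~\ref{LLL}) forces every Schreier set into this witnessed family. The only genuine difference is the closing step: the paper uses a \emph{single} functional attached to the dominant sign class $F^+$ (after arranging $\sum_{i\in F^+}\alpha_i\ge\tfrac12\sum_{i\in F}|\alpha_i|$), whereas you take one functional per sign class and average via $\tfrac12(x^*_+-x^*_-)$, invoking your dual reformulation. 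Both routes yield the constant $c/2$ for the same underlying reason.

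One correction to your heuristic: you attribute the smallness of $|x^*_+(z_j)|$ on $F_-$ to weak nullity ``forcing the values of any already fixed functional to decay along tails'', but the indices of $F_-$ may interlace with those of $F_+$ and need not lie in any tail. The off-$F_+$ control genuinely comes from the AMT selection lemma (weak nullity enters there, via the weak compactness of the image set in $c_0$), not from pointwise decay of a fixed functional. Relatedly, rather than extracting a subsequence valid for \emph{every} $\ep>0$ and then letting $\ep\to0$ by weak$^*$ compactness, it is cleaner to fix the slack $\delta$ once, as the paper does; the single-functional endgame then delivers $\norm{\sum_{i\in F}\alpha_i z_i}\ge\frac c2\sum_{i\in F}|\alpha_i|$ directly.
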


\begin{proof} Fix $\delta\in(0,1)$ such that $\twu{x_k}>c+3\delta$. We begin by showing that without loss of generality (up to passing to a subsequence)
we may suppose that for any finite set $F\subset\en$ the following holds:

\begin{equation}\label{eq:AMT}
\begin{aligned}
\Big(
\exists x^*\in B_{X^*}&\;\forall k\in F\colon x^*(x_k)\ge c+3\delta \Big)
\\ \implies &\exists y^*\in B_{X^*}
\Big(\forall k\in F: y^*(x_k)>c+2\delta\mbox{ and }\sum_{k\in \en\setminus F} \abs{y^*(x_k)}<\delta\Big).
\end{aligned}
\end{equation}

To this end we will use \cite[Lemma 2.4.7]{AMT}. The set 
$$D=\{ (x^*(x_k))_{k=1}^\infty \colon x^*\in B_{X^*} \}$$
is a convex symmetric weakly compact subset of $c_0$. (Indeed, the mapping $x^*\mapsto (x^*(x_k))_{k=1}^\infty$ is a mapping of $B_{X^*}$ onto $D$ which is continuous from the weak$^*$ topology to the weak topology of $c_0$.) Further, fix $\ep\in(0,1)$ such that
$$(1-\ep)(c+3\delta)>c+2\delta\mbox{ and }\ep(c+3\delta)<\delta.$$
Then the quoted lemma applied to the set $D$, the constant $c+3\delta$ in place of $\delta$ and $\ep$
yields an infinite set $M\subset \en$ such that for any finite set $F\subset M$ \eqref{eq:AMT} is satisfied with $M$ in place of $\en$.
Up to passing to a subsequence we may suppose that $M=\en$, therefore without loss of generality \eqref{eq:AMT} holds for any finite set $F\subset\en$.

Further, set
$$\K_0=\{ \{k\in\en\colon x^*(x_k)\ge c+3\delta\} \colon x^*\in B_{X^*} \},$$
let $\K$ be the family of those subsets of $\en$ which are contained in an element of $\K_0$. 

To complete the proof we will use the following lemma which is an easier variant of \cite[Theorem 2]{LA-T} and was suggested to us by the referee:

\begin{lemma}\label{LLL} Let $\F$ be a nonempty hereditary family of finite subsets of $\en$.
Then there is an infinite set $M\subset \en$ such that one of the following conditions is satisfied:
\begin{itemize}
	\item[(a)] There is some $d\in\en\cup\{0\}$ such that $\F\cap \P(M)=[M]^{\le d}$.
	\item[(b)] There is a strictly increasing mapping $f\colon M\to \en$ such that $\{F\subset M:\#F\le f(\min F)\}\subset \F$. 
\end{itemize}
\end{lemma}

(A family $\F$ is heredirary if $B\in\F$ whever $B\subset A$ and $A\in F$. Furhter, 
by $\P(M)$ we denote the power set of $M$, by $[M]^{\le d}$ the family of all subsets of $M$ of cardinality at most $d$,
by $[M]^{d}$ the family of all subsets of $M$ of cardinality exactly $d$. )

\begin{proof} Suppose that there is an infinite set $M\subset \en$ and $d\in\en$ such that $\F\cap[M]^{d}=\emptyset$.
Let $d_0\in\en$ be the minimal number with this property. If $d_0=1$, then $\F\cap\P(M)=\{\emptyset\}=[M]^{\le 0}$. Suppose that $d_0>1$. By the classical Ramsey theorem there is an infinite set $N\subset M$ such that either $\F\cap[N]^{d_0-1}=\emptyset$ or $[N]^{d_0-1}\subset \F$. The first possibility cannot occur due to the minimality of $d_0$.
The second one implies $\F\cap \P(N)=[N]^{\le d_0-1}$ (as $\F$ is hereditary and $\F\cap [N]^{d_0}=\emptyset$). Hence, the condition (a) is satisfied. 

Next suppose that such an infinite set $M\subset\en$ and $d\in\en$ do not exist. It means that $\F\cap[M]^d\ne\emptyset$ for each inifinite $M\subset \en$ and each $d\in\en$. Using the classical Ramsey theorem we deduce that for any infinite $M\subset\en$ and any $d\in\en$ there is an infinite set $N\subset M$ such that $[N]^d\subset\F$. Since $\F$ is hereditary,
automatically $[N]^{\le d}\subset F$. Therefore we can by induction construct a sequence $(M_n)$ with the following properties.
\begin{itemize}
	\item $\en\supset M_1\supset M_2\supset\cdots$
	\item $M_n$ is infinite for each $n\in\en$.
	\item $[M_n]^{\le n}\subset \F$ for each $n\in\en$.
\end{itemize}
Choose a strictly increasing sequence $(m_n)$ of natural numbers such that $m_n\in M_n$ for each $n\in\en$. Set $M=\{m_n:n\in\en\}$ and define $f\colon M\to \en$ by $f(m_n)=n$. Then $f$ witnesses that (b) is satisfied for $M$.
\end{proof}

Now we continue the proof of Lemma~\ref{L:ramsey}. We apply the preceding lemma to the family $\K$. We observe that the case (a) cannot occur. Indeed, suppose that $M\subset \en$ is infinite and $d\in\en$ such that $\K\cap\P(M)=[M]^{\le d}$.
Then, in particular, $\K_0\cap\P(M)\subset [M]^{\le d}$, hence
for each $x^*\in B_{X^*}$ we have
\begin{multline*}\#\{k\in M:\abs{x^*(x_k)}\ge c+3\delta\} = \#\{k\in M:{x^*(x_k)}\ge c+3\delta\}\\ + \#\{k\in M:{(-x^*)(x_k)}\ge c+3\delta\}\le 2d,\end{multline*}
hence $\twu{x_k}\le c+3\delta$, a contradiction with the choice of $\delta$.

Thus the case (b) must occur. Fix the relevant set $M$ and function $f$. Up to passing to a subsequence we may suppose that $M=\en$. Now we are going to check that the sequence $(x_{k})$ (which was made from the original one by passing twice to a subsequence) generates an $\ell_1$-spreading model with the constant $\frac c2$. To do that let $F\subset\en$ be a subset satisfying $\#F\le\min F$ and $(\alpha_i)_{i\in F}$ be any choice of scalars. Set $F^+=\{i\in F: \alpha_i>0\}$ and  $F^-=\{i\in F: \alpha_i<0\}$. Without loss of generality we may suppose that
$\sum_{i\in F^+}\alpha_i\ge \sum_{i\in F^-}(-\alpha_i)$ (otherwise we can pass to $(-\alpha_i)$). Since $\#F^+\le\#F\le \min F\le f({\min F})\le f(\min F^+)$, we get $F^+\in\K$. Therefore using \eqref{eq:AMT} we can find $x^*\in B_{X^*}$ such that
$$x^*(x_{i})>c+2\delta\mbox{ for }i\in F^+\mbox{\qquad and\qquad} \sum_{i\in\en\setminus F^+}|x^*(x_{i})|<\delta.$$
Then
$$\begin{aligned}\norm{\sum_{i\in F} \alpha_i x_{i}}&\ge \sum_{i\in F} \alpha_i x^*(x_{i}) \ge \sum_{i\in F^+} \alpha_i(c+2\delta)-\sum_{i\in F\setminus F^+}|\alpha_i||x^*(x_{i})|\\&\ge\frac12(c+2\delta)\sum_{i\in F} \abs{\alpha_i} - \delta\sum_{i\in F} \abs{\alpha_i} =\frac c2\sum_{i\in F} \abs{\alpha_i},\end{aligned}$$
which completes the argument.
\end{proof}

\begin{remark} The proof of Theorem~\ref{P:wbs<2wu<4sm} is inspired by the proof of \cite[Theorem 2.4]{LA}. More precisely, Lemma~\ref{L:ccawu} is straightforward. Lemma~\ref{L:sm} is a more elementary and more precise version of the argument in \cite[p. 2256, second paragraph]{LA}. The quoted approach would yield $\wbs{A}\ge\frac14\sm{A}$, with a little more care  $\wbs{A}\ge\frac12\sm{A}$. Our approach is more elementary, we use just the triangle inequality and not the possibility to extract a basic subsequence, and we obtain the best possible inequality. 
Finally, Lemma~\ref{L:ramsey} is a more precise version of the proof of \cite[Theorem 2.4(a)$\implies$(b)]{LA}.
\end{remark}

We finish this section by giving the last missing proof from the previous section.

\begin{proof}[Proof of Example~\ref{EX:be+ell1}]
Let $B$ be the Baernstein space, i.e., the separable reflexive space constructed in \cite{baer} and let $(b_n)$ denotes its canonical basis. Let $X=B\oplus_{\infty} \ell_1$ and $(e_n)$ be the standard basis of $\ell_1$. For $\ep\in[0,1]$ set $A_\ep=\{(b_n,\ep e_n)\colon n\in\en\}$.
Then $A_\ep\subset B_X$. Since $(b_n)$ converges weakly to zero, the set $A_0$ is weakly compact and hence $\omega(A_\ep)\le \dh(A_\ep,A_0)\le\ep$.

Fix $\ep\in(0,1]$. It is clear that the sequence $(b_n,\ep e_n)$ is equivalent to the $\ell_1$ basis and hence $A_\ep$ contains
no nontrivial weakly convergent sequences, and thus trivially $\wbs{A_\ep}=0$.
Finally, by the very definitions we get $\bs {A_\ep}\ge \bs{A_0}$. By the construction of $B$ in \cite{baer} we know that $(b_n)$ is weakly null and generates an $\ell_1$-spreading model with $\delta=1$. Thus $\bs {A_0}\ge2$ by Theorem~\ref{P:wbs<2wu<4sm} and hence $\bs{A_\ep}\ge 2$. This completes the proof.
\end{proof}

\section{Quantities applied to the unit ball}

In this section we investigate possible values of the quantities $\bs{}$ and $\wbs{}$ when applied to the unit ball of a Banach space. There are two main results in this section. The first one is a dichotomy for the quantity $\wbs{}$, the second one deals with the quantity $\bs{}$ in nonreflexive spaces.

\begin{thm}
\label{T:dichotomie}
Let $X$ be a Banach space. Then 
$$\wbs{B_X}=\begin{cases}0 & \mbox{ if $X$ has the weak Banach-Saks property},\\ 2 &
\mbox{ otherwise}.\end{cases}$$
In particular:
\begin{itemize}
	\item There is a separable reflexive Banach space $X$ with $\bs{B_X}=\wbs{B_X}=2$.
	\item There is a nonreflexive Banach space $X$ with separable dual with $\bs{B_X}=\wbs{B_X}=2$.
	\item If $X=C[0,1]$, then $\bs{B_X}=\wbs{B_X}=2$.
\end{itemize}
\end{thm}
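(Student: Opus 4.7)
The plan is to prove the dichotomy first and then deduce the three ``in particular'' assertions from it. The upper bound $\wbs{B_X}\le\bs{B_X}\le 2$ holds for any $X$: by convexity Ces\`aro averages of points of $B_X$ remain in $B_X$, so differences of such averages have norm at most $2$. The easy direction of the dichotomy is equally immediate: if $X$ has the weak Banach-Saks property then every weakly convergent sequence in $B_X$ admits a Ces\`aro convergent subsequence, giving $\wbs{B_X}=0$.

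The substantive content is the converse direction. Under the hypothesis that $X$ fails wBS, I would first invoke Theorem~\ref{P:wbs<2wu<4sm} to deduce $\sm{B_X}>0$ from $\wbs{B_X}>0$; this yields a weakly null sequence $(y_k)$ with $\sup_k\|y_k\|\le 2$ generating an $\ell_1$-spreading model with some constant $\delta>0$. The heart of the proof is then a James-type distortion claim: \emph{for every $\epsilon>0$ there exists a weakly null sequence $(u_j)\subset B_X$ with $\|u_j\|=1$ generating an $\ell_1$-spreading model with constant $\ge 1-\epsilon$.} Granting the claim, Lemma~\ref{L:sm} applied to $(u_j)$ gives $\tcca{u_{j^3}}\ge 2(1-\epsilon)$, hence $\wbs{B_X}\ge 2(1-\epsilon)$, and letting $\epsilon\downarrow 0$ and combining with the trivial upper bound yield $\wbs{B_X}=2$. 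Since $\wbs{B_X}\le\bs{B_X}\le 2$ we then also get $\bs{B_X}=2$.

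To establish the James-type claim I would work with the spreading model $(e_k,\|\cdot\|_E)$ of a subsequence of $(y_k)$ and set
$$\mu=\inf\bigg\{\frac{\|\sum_i\alpha_i e_i\|_E}{\sum_i|\alpha_i|}:(\alpha_i)\in c_{00}\setminus\{0\}\bigg\}\in[\delta,2].$$
For prescribed $\epsilon'>0$, pick $N\in\en$ and $(\alpha_i)_{i=1}^N$ with $\sum_i|\alpha_i|=1$ and $\nu:=\|\sum_i\alpha_i e_i\|_E<\mu+\epsilon'$, and form blocks $z_j=\sum_{i=1}^N\alpha_i y_{m_{j,i}}$ with strictly increasing indices $m_{j,i}$ chosen by a diagonal extraction so that (i) $\|z_j\|\to\nu$ and (ii) the indices $\{m_{j,i}\}$ respect the lexicographic order on $(j,i)$, so that $(z_j)$ itself generates a spreading model which, by spreading invariance of $\|\cdot\|_E$, is computed from $(e_k)$ by concatenation. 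Applying the infimum defining $\mu$ to the coefficient vector $(\beta_j\alpha_i)_{j,i}$ then shows that the spreading model of $(z_j)$ has $\ell_1$-constant $\ge\mu$, so the normalization $u_j=z_j/\|z_j\|$ (which is weakly null because $(y_k)$ is and $\|z_j\|$ is eventually bounded below by $\mu/2$) has $\|u_j\|=1$ and spreading model constant $\ge\mu/\nu\ge\mu/(\mu+\epsilon')$, which exceeds $1-\epsilon$ once $\epsilon'$ is small enough in terms of $\mu$ and $\epsilon$. The main obstacle is precisely this distortion step: the delicate point is that, in constructing $(z_j)$ so that its norms descend to the near-infimum $\nu$, one still wants its own spreading model to inherit the full $\ell_1$-constant $\mu$ of the original one; everything else in the proof is soft.

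For the three explicit assertions, the dichotomy reduces each case to exhibiting a space that fails wBS. The Baernstein space $B$ of \cite{baer} is reflexive and fails the Banach-Saks property; but reflexivity forces BS and wBS to coincide (every bounded sequence has a weakly convergent subsequence), so $B$ also fails wBS. The Schreier space is nonreflexive, has separable dual, and its canonical basis is a weakly null sequence generating an $\ell_1$-spreading model of constant $1$, so it fails wBS. Finally, $C[0,1]$ fails wBS by a classical Schreier-type construction of a weakly null sequence of continuous functions admitting no Ces\`aro convergent subsequence. In each case the dichotomy gives $\wbs{B_X}=2$, and combined with $\wbs{B_X}\le\bs{B_X}\le 2$ this yields $\bs{B_X}=2$ as claimed.
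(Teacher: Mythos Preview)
Your proposal is correct and follows essentially the same route as the paper: the heart of the argument is the James-type distortion step for $\ell_1$-spreading models, which is exactly Lemma~\ref{l:dichotomie}, and your sketch of its proof (pass to the spreading model, take the infimal $\ell_1$-constant $\mu$, pick near-infimizing coefficients, form consecutive blocks, normalize) mirrors the paper's argument using the Brunel--Sucheston realization of the spreading model. The paper then packages the conclusion as ``Lemma~\ref{l:dichotomie} gives $\sm{B_X}\in\{0,1\}$, and Theorem~\ref{P:wbs<2wu<4sm} converts this into $\wbs{B_X}\in\{0,2\}$''; you instead apply Lemma~\ref{L:sm} directly to the distorted sequence, which is the same thing unwound.

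One small point to tighten: you pass from ``$X$ fails wBS'' to ``$\wbs{B_X}>0$'' (and hence $\sm{B_X}>0$) without comment, but this implication is not immediate from the definitions---a weakly convergent sequence with no Ces\`aro convergent subsequence might a priori have $\tcca{}=0$. In the paper this is exactly the content of Corollary~\ref{c:kvalit} (proved via a diagonal argument from Theorem~\ref{P:wbs<2wu<4sm}); alternatively one may cite directly the qualitative characterization at the start of Section~4 to get a weakly null sequence generating an $\ell_1$-spreading model. Your treatment of the three examples matches the paper's.
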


\begin{thm}
\label{T:nr} \
\begin{enumerate}
	\item Let $X$ be a Banach space containing an isomorphic copy of $\ell_1$. Then $\bs{B_X}=2$. In particular, $\bs{B_{\ell_1}}=2$ and $\wbs{B_{\ell_1}}=0$.
	\item Let $X$ be a nonreflexive Banach space containing no isomorphic copy of $\ell_1$. Then $\bs{B_X}\in[1,2]$. In particular,
	$\bs{B_{c_0}}=1$, $\bs{B_{c}}=2$ and $\wbs{B_{c_0}}=\wbs{B_c}=0$, where $c$ denotes the space of convergent sequences equipped with the supremum norm.
\end{enumerate}
\end{thm}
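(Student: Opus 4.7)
In both parts the upper bound $\bs{B_X}\le 2$ is immediate: since $\diam B_X\le 2$, we have $\cca{x_k}\le\ca{x_k}\le 2$ for every $(x_k)\subset B_X$, hence $\tcca{x_k}\le 2$. All the substance lies in the lower bounds.

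\medskip

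For part~(1), assuming $X$ contains an isomorphic copy of $\ell_1$, I invoke James' distortion theorem: for each $\ep>0$ there is a normalized sequence $(x_n)\subset B_X$ with $(1+\ep)^{-1}\sum_i\abs{\alpha_i}\le\norm{\sum_i\alpha_ix_i}\le\sum_i\abs{\alpha_i}$ for all scalars. For any subsequence the decomposition $z_m-z_n=\sum_{i\le n}(\tfrac1m-\tfrac1n)x_i+\sum_{i=n+1}^m\tfrac1m x_i$ combined with the lower $\ell_1$-estimate yields $\norm{z_m-z_n}\ge\tfrac{2(1-n/m)}{1+\ep}$, so $\tcca{x_n}\ge 2/(1+\ep)$ and letting $\ep\to 0$ gives $\bs{B_X}=2$. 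Specializing to $X=\ell_1$ yields $\bs{B_{\ell_1}}=2$, and $\wbs{B_{\ell_1}}=0$ is immediate from the Schur property: weakly convergent sequences in $\ell_1$ are norm-convergent, hence Ces\`aro convergent.

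\medskip

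For part~(2) the lower bound $\bs{B_X}\ge 1$ proceeds by reduction to a separable subspace. Eberlein--\v{S}mulian together with nonreflexivity of $X$ supplies a bounded sequence in $B_X$ with no weakly convergent subsequence; Rosenthal's $\ell_1$-theorem (applicable since $X\not\supset\ell_1$) then extracts a weakly Cauchy subsequence whose closed linear span $Y$ is separable, nonreflexive, and still $\ell_1$-free. Since the quotient map $Y^{**}\to Y^{**}/Y$ is open and $Y^{**}/Y\ne\{0\}$, for each $\ep>0$ I can pick $y^{**}\in B_{Y^{**}}$ with $\dist(y^{**},Y)>1-\ep$. The sharpened Odell--Rosenthal theorem (for separable $\ell_1$-free $Y$, every element of $B_{Y^{**}}$ is the weak$^*$-limit of a sequence in $B_Y$) then produces $(x_n)\subset B_Y\subset B_X$ with $x_n\to y^{**}$ in $w^*$. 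The averages $z_m=\tfrac1m\sum_{i=1}^m x_i$ also weak$^*$-converge to $y^{**}$, so by weak$^*$-lower semicontinuity of the norm, $\liminf_m\norm{z_m-z_n}\ge\norm{y^{**}-z_n}_{Y^{**}}\ge\dist(y^{**},Y)>1-\ep$ for every $n$, giving $\cca{x_n}\ge 1-\ep$. Every subsequence of $(x_n)$ still weak$^*$-converges to $y^{**}$, so the estimate persists under passage to subsequences, yielding $\tcca{x_n}\ge 1-\ep$ and $\bs{B_X}\ge\bs{B_Y}\ge 1$.

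\medskip

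For the explicit values, $\wbs{B_{c_0}}=\wbs{B_c}=0$ is given by Theorem~\ref{T:dichotomie}, since both $c_0$ and $c\cong c_0$ enjoy the weak Banach--Saks property. The equality $\bs{B_c}=2$ I would witness via the explicit sequence $x_n\in B_c$ with $x_n^{(j)}=1$ for $j\le n$ and $x_n^{(j)}=-1$ for $j>n$ (so $x_n\in c$ with $\norm{x_n}_\infty=1$); a direct coordinatewise computation of the Ces\`aro averages gives $\norm{z_m-z_n}_\infty=2(1-n/m)\to 2$, and the same persists under arbitrary subsequences, so $\tcca{x_n}=2$. The lower bound $\bs{B_{c_0}}\ge 1$ is supplied by part~(2); the reverse inequality $\bs{B_{c_0}}\le 1$ is what I expect to be the main obstacle, since the paper's general inequalities give only $\bs{B_{c_0}}\le\beta(B_{c_0})\le 2$. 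I would attack it by passing to a weakly Cauchy subsequence via Rosenthal and then performing a Bessaga--Pelczy\'nski block extraction so that $(x_n)$ becomes, up to arbitrarily small norm perturbation, a coordinate-block sequence; the averages of such a block-structured sequence can then be estimated directly in the lattice norm of $c_0$ to give $\limsup_{m,n}\norm{z_m-z_n}_\infty\le 1$.
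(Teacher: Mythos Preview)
Your treatment of part~(1), of $\bs{B_c}=2$, and of $\wbs{B_{c_0}}=\wbs{B_c}=0$ matches the paper's proof essentially verbatim (same James distortion input, same explicit sequence in $c$, same appeal to the weak Banach--Saks property of $c_0$).

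For the lower bound $\bs{B_X}\ge 1$ in part~(2) you take a genuinely different route. The paper simply quotes the already-established inequality $\wck{X}{B_X}\le\bs{B_X}$ from Theorem~\ref{P:wbs<bs<beta} together with the known fact that $\wck{X}{B_X}=1$ for every nonreflexive $X$ (citing \cite{Gr-James,CKS}). Your argument instead constructs the witnessing sequence directly: reduce to a separable $\ell_1$-free nonreflexive subspace $Y$, pick $y^{**}\in B_{Y^{**}}$ with $\dist(y^{**},Y)>1-\ep$, invoke Odell--Rosenthal to realize $y^{**}$ as a weak$^*$-limit of a sequence in $B_Y$, and use weak$^*$-lower semicontinuity of the norm on the Ces\`aro means. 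This is correct and more self-contained (it avoids the external citations), at the cost of importing Odell--Rosenthal; note also that your argument only works under the hypothesis $X\not\supset\ell_1$, whereas the paper's inequality $\wck{X}{B_X}\le\bs{B_X}$ holds unconditionally.

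The one place where your proposal is not yet a proof is $\bs{B_{c_0}}\le 1$. Your Bessaga--Pelczy\'nski sketch is fine when the weak$^*$-limit $x$ of $(x_n)$ lies in $c_0$ (then $(x_n-x)$ is weakly null and a block-perturbation argument gives $\tcca{x_n}=0$), but when $x\in\ell^\infty\setminus c_0$ you cannot subtract $x$ inside $c_0$, and your sketch does not say how the estimate $\le 1$ emerges in that case. The paper handles this by a direct gliding-hump extraction: pass to a pointwise-convergent subsequence with limit $x\in B_{\ell^\infty}$, choose indices so that at each coordinate $i$ all but one term $x_{k_n}(i)$ is either $\ep$-close to $0$ or $\ep$-close to $x(i)$, and deduce $\abs{y_N(i)-\tfrac{x(i)}{2}}\le\tfrac{\abs{x(i)}}{2}+\ep+O(1/N)$, whence $\norm{y_N-y_M}\le\norm{x}_\infty+2\ep+O(1/N)+O(1/M)$. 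The bound $\norm{x}_\infty\le 1$ is exactly what produces the constant~$1$. Your block-extraction idea can be pushed through to the same conclusion, but only once you make this coordinatewise ``between $0$ and $x(i)$'' estimate explicit.
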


The key ingredient of the proof of Theorem~\ref{T:dichotomie} is the following lemma which can be viewed as a variant of the James distortion theorem for spreading models.


\begin{lemma}
\label{l:dichotomie}
Let $X$ be a Banach space. Then $\sm{B_X}\in \{0,1\}$.
\end{lemma}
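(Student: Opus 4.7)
The plan is to prove the dichotomy as two separate inequalities: $\sm{B_X}\le 1$ always, and $\sm{B_X}\ge 1$ whenever $\sm{B_X}>0$. For the easy upper bound, take any $(x_k)\subset B_X$ with $x_k\overset{w}{\to}x$ such that $(x_k-x)$ generates an $\ell_1$-spreading model with constant $\delta$, and apply the defining inequality to the admissible two-element set $F=\{j,k\}$ (with $2\le j<k$) with signed coefficients $\alpha_j=\frac12$, $\alpha_k=-\frac12$. This gives $\norm{\frac12(x_j-x_k)}\ge\delta$, but $\norm{x_j-x_k}\le 2$ since $x_j,x_k\in B_X$, so $\delta\le 1$; taking the supremum yields $\sm{B_X}\le 1$.

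For the reverse direction, assume $\sm{B_X}\ge\delta_0>0$ and fix $\ep>0$. I would choose $(x_k)\subset B_X$, $x_k\overset{w}{\to}x$, with $(u_k):=(x_k-x)$ generating an $\ell_1$-spreading model with constant arbitrarily close to $\delta_0$. By the Brunel--Sucheston theorem, pass to a subsequence along which the spreading limits $\norm{\sum\alpha_ie_i}_E:=\lim_{k_1<\dots<k_n\to\infty}\norm{\sum\alpha_iu_{k_i}}$ exist and define a $1$-spreading basis $(e_i)$ satisfying $\delta_0\sum|\alpha_i|\le\norm{\sum\alpha_ie_i}_E\le 2\sum|\alpha_i|$. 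Let $\lambda=\inf\{\norm{\sum\alpha_ie_i}_E/\sum|\alpha_i|:\alpha\ne 0\}\ge\delta_0$ and fix a near-minimizer $(\gamma_i)_{i\in G}$ with $\norm{\sum\gamma_ie_i}_E\le(\lambda+\eta)\sum|\gamma_i|$ for small $\eta>0$. Then place disjoint, increasingly sparse copies of the pattern $G$ at positions $k_i^{(j)}$ deep in the tail, producing blocks $v_j=\sum_{i\in G}\gamma_iu_{k_i^{(j)}}$ whose norms $\norm{v_j}$ all approximate $t:=\norm{\sum\gamma_ie_i}_E$. Normalizing $w_j=v_j/\norm{v_j}$ puts $(w_j)$ on the unit sphere of $X$ (hence in $B_X$), keeps $w_j\overset{w}{\to}0$, and---provided the sparsity is chosen so that any admissible $F$ for $(w_j)$ unpacks into an admissible index set for $(u_k)$---the spreading-model inequality for $(u_k)$ (with effective constant $\lambda-O(\eta)$ after further tail passage) gives
\[
\norm{\sum_{j\in F}\beta_jw_j}\ge\frac{(\lambda-O(\eta))\sum|\gamma_i|}{t+O(\eta)}\sum|\beta_j|\ge\frac{\lambda-O(\eta)}{\lambda+O(\eta)}\sum|\beta_j|,
\]
which exceeds $1-\ep$ once $\eta$ is small. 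Hence $(w_j)$ witnesses $\sm{B_X}>1-\ep$; letting $\ep\to 0$ gives $\sm{B_X}\ge 1$.

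The main technical obstacle will be the diagonal/sparsity argument behind the block construction: the positions $k_i^{(j)}$ must be chosen simultaneously sparse enough that (a) each $\norm{v_j}$ is uniformly close to the spreading-model value $t$, and (b) whenever $F$ satisfies $\#F\le\min F$ at the block level, the unpacked index set $\{k_i^{(j)}:j\in F,\ i\in G\}$ still satisfies the admissibility condition $\#\le\min$ at the $(u_k)$-level, so that the $\ell_1$-spreading inequality of $(u_k)$ applies. This lifting from the spreading model back to $X$ while respecting the admissibility condition is standard bookkeeping but is the one place where real care is needed; the rest of the argument reduces to the triangle inequality, the near-minimizer choice for $(\gamma_i)$, and the division by $\norm{v_j}$ that turns a slack $\ell_1$-constant $\lambda$ into one arbitrarily close to $1$ (the James trick).
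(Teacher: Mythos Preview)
Your proposal is correct and follows essentially the same route as the paper: pass to a Brunel--Sucheston spreading model for a weakly null sequence built from $(x_k-x)$, pick a near-minimizer for the infimum $\lambda$ (the paper's $\beta$) of the $\ell_1$-distortion ratio on the spreading model, form block vectors by placing shifted copies of the minimizing pattern far out in the sequence, normalize to land in $B_X$, and verify the $\ell_1$-spreading constant is $\ge 1-\omega$; the sparsity/admissibility bookkeeping you flag is exactly what the paper handles by its choice of the function $m(\eta,N)$ and the final reindexing. The only cosmetic differences are that the paper first normalizes $u_k=(x_k-x)/\norm{x_k-x}$ and then divides all blocks by the single constant $(1+\eta)^2\beta$ rather than by the individual norms $\norm{v_j}$, and that you supply the explicit upper bound $\sm{B_X}\le 1$ via the two-point set $F=\{j,k\}$, which the paper leaves implicit.
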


\begin{proof}
Assume that $\sm{B_X}>0$. Then there is a sequence $(x_k)$ in $B_X$ weakly converging to some $x\in B_X$ and $\delta>0$ such that the sequence  $(x_k-x)$ generates an $\ell_1$-spreading model with $\delta>0$, i.e.,
\[
\forall F\subset \en, \#F\le \min F\; \forall (\alpha_i)\colon \norm{\sum_{i\in F} \alpha_i (x_i-x)}\ge \delta\sum_{i\in F} \abs{\alpha_i}.
\] \
We will show that, for any $\omega>0$, there exists a weakly null sequence $(y_k)$ in $B_X$ which generates an $\ell_1$-spreading model with  $1-\omega$. 

The first step is to replace the sequence $(x_k-x)$ by a normalized weakly null sequence generating an $\ell_1$-spreading model. 
Since $(x_k-x)$ generates an $\ell_1$-spreading model, no subsequence is norm-convergent and hence $\inf\limits_k\norm{x_k-x}>0$. Thus,
if we set $u_k=\frac{x_k-x}{\norm{x_k-x}}$, then $(u_k)$ is a normalized weakly null sequence. Moreover, since $\|x_k-x\|\le2$ for each $k$,
the sequence $(u_k)$ generates an $\ell_1$-spreading model with $\frac\delta2$. By \cite[Proposition 1.5.4]{albiac-kalton} we can suppose (up to passing to a subsequence) that $(u_k)$ is a basic sequence. Moreover, by \cite[Theorem~6.6]{f-banach} we may assume (by passing to a further subsequence if necessary) that there exists a Banach space $(Y,\abs{\cdot})$ with a (subsymmetric) basis $(e_k)$ such that
\begin{equation}\label{E:dich}
\begin{aligned}
\forall \ep>0 &\;\forall N\in\en\; \exists m(\ep,N)\in \en\colon m(\ep,N)\le k_1<k_2<\cdots<k_N\implies \\
&\forall (\alpha_i)\colon(1-\ep)\abs{\sum_{i=1}^N \alpha_ie_i}\le \norm{\sum_{i=1}^N \alpha_i u_{k_i}}\le (1+\ep)\abs{\sum_{i=1}^N \alpha_ie_i}.
\end{aligned}
\end{equation}
Since $(u_k)$ generates an $\ell_1$-spreading model, the sequence $(e_k)$ is equivalent to the $\ell_1$-basis, so we may suppose that $Y$ is the space $\ell_1$ with an equivalent norm $\abs{\cdot}$ and $(e_k)$ is the canonical basis. Hence, if we set
\[
\beta=\inf\left\{\abs{\sum_{i=1}^\infty \alpha_ie_i}\colon \sum_{i=1}^\infty \abs{\alpha_i}=1\right\},
\]
then $\beta>0$ and
\[
\forall (\alpha_i)\in\ell_1\colon \abs{\sum_{i=1}^\infty \alpha_ie_i}\ge \beta\sum_{i=1}^\infty \abs{\alpha_i}.
\]

To complete the proof choose an arbitrary $\omega>0$. We can fix $\eta>0$ such that $\frac{1-\eta}{(1+\eta)^2}>1-\omega$.
Using the density of $c_{00}$ in $\ell_1$ we find  $n\in\en$ and scalars $(\alpha_i)_{i=1}^n$ such that $\sum_{i=1}^n \abs{\alpha_i}=1$ and $\abs{\sum_{i=1}^n \alpha_ie_i}<(1+\eta)\beta$.
Set $m_0=m(\eta,n)$. For every $k\in\en$  we set
\[
y_k=\frac{1}{(1+\eta)^2\beta}\sum_{i=1}^n\alpha_i u_{m_0+kn+i}.
\]
From \eqref{E:dich} and the choice of $(\alpha_i)_{i=1}^n$ we obtain
\[
\norm{y_k}\le \frac{1}{(1+\eta)^2\beta}(1+\eta)\abs{\sum_{i=1}^n \alpha_ie_i}\le \frac{(1+\eta)^2\beta}{(1+\eta)^2\beta}=1,
\]
hence $y_k$ are elements of $B_X$. Further, the sequence $(y_k)$ weakly converges to zero.

Let $N\in\en$ be fixed. Let $k_1<k_2<\cdots<k_N$ be indices, where $k_1n\ge m(\eta, nN)$. If $(\beta_j)_{j=1}^N$ are scalars,
then we have from \eqref{E:dich} and from the definition of $\beta$ estimates
\[
\begin{aligned}
\norm{\sum_{j=1}^n \beta_j y_{k_j}}&=\frac{1}{(1+\eta)^2\beta}\norm{\sum_{j=1}^N\beta_j\left(\sum_{i=1}^n \alpha_i u_{m_0+k_jn+i} \right)}\\
&\ge \frac{1-\eta}{(1+\eta)^2\beta}\abs{\sum_{j=1}^N \beta_j\left(\sum_{i=1}^n \alpha_i e_{(j-1)n+i}\right)}\\
&\ge \frac{1-\eta}{(1+\eta)^2}\sum_{j=1}^N \abs{\beta_j}\left(\sum_{i=1}^n\abs{\alpha_i}\right)\\
&=\frac{1-\eta}{(1+\eta)^2}\sum_{j=1}^N \abs{\beta_j} \ge (1-\omega)\sum_{j=1}^N \abs{\beta_j}.
\end{aligned}
\]
Hence we have shown the following statement for the sequence $(y_k)$:
\begin{multline*}
\forall N\in\en\colon
 \left(\frac{m(\eta,nN)}n\le k_1<k_2<\dots<k_N \right.\\\left.\implies
\forall (\beta_j)\colon \norm{\sum_{j=1}^N \beta_j y_{k_j}}\ge (1-\omega)\sum_{j=1}^N\abs{\beta_j}\right).
\end{multline*}
To finish the proof it is enough to extract a further subsequence from $(y_k)$ satisfying the definition of the  $\ell_1$-spreading model with $1-\omega$.
To this end, we choose an increasing sequence $(n_j)$ of indices satisfying $n_j\ge \frac{m(\eta,nj)}n$ and set $z_j=y_{n_j}$, $j\in\en$.
Let now $N\in\en$ and let $N\le k_1<k_2<\cdots< k_N$ be indices and $(\alpha_i)$ be scalars. Then
\[
\norm{\sum_{i=1}^N\alpha_j z_{k_j}}=\norm{\sum_{i=1}^N \alpha_j y_{n_{k_j}}}\ge (1-\omega)\sum_{i=1}^N \abs{\alpha_i},
\]
because $\frac{m(\eta,nN)}n\le n_N\le n_{k_1}<n_{k_2}<\cdots <n_{k_N}$.
\end{proof}

\begin{proof}[Proof of Theorem~\ref{T:dichotomie}.]
The equality follows from Lemma~\ref{l:dichotomie} and Theorem~\ref{P:wbs<2wu<4sm}.

The first example of a separable reflexive space without the Banach-Saks property is constructed in \cite{baer}.

As a nonreflexive space with separable dual which fails the weak Banach-Saks property one can take
the Schreier space described for example in \cite[Construction 0.2]{CaSh}. It is not reflexive since it contains a copy of $c_0$
by \cite[Proposition 0.7]{CaSh}, it has separable dual since it has an unconditional basis and does not contain a copy of $\ell_1$
\cite[Proposition 0.4(iii) and Proposition 0.5]{CaSh}. It fails the weak Banach-Saks property since the basis is weakly null and generates an $\ell_1$-spreading model due to \cite[Proposition 0.4(iv)]{CaSh}.

The space $C[0,1]$ fails the weak Banach-Saks property since it contains any separable space as a subspace. A direct proof is contained already
in \cite{schreier}.

\end{proof}

\begin{proof}[Proof of Theorem~\ref{T:nr}.]
(1) Let $(x_k)$ be a bounded sequence and $\delta>0$ such that
\begin{equation}\label{eq:ell1}
\norm{\sum_{i=1}^n\alpha_i x_i}\ge\delta\sum_{i=1}^n\abs{\alpha_i},\qquad n\in\en, \alpha_1,\dots,\alpha_n\in\er.
\end{equation}
Then $\cca{x_k}\ge 2\delta$. Indeed, let $m\ge n$. Then
$$\begin{aligned}
\norm{\frac 1m\sum_{i=1}^m x_i - \frac 1n\sum_{i=1}^n x_i}
&=\norm{\left(\frac1m-\frac1n\right)\sum_{i=1}^n x_i + \frac 1m\sum_{i=n+1}^m x_i}
\\ &\ge \delta\left(n\left(\frac1n-\frac1m\right)+\frac1m(m-n)\right)
=2\delta\left(1-\frac nm\right).
\end{aligned}$$
For any fixed $n$ the latter expression has limit $2\delta$ when $m\to+\infty$. This shows that $\cca{x_k}\ge 2\delta$.
Since any subsequence of $(x_k)$ satisfies \eqref{eq:ell1} as well, we get even $\tcca{x_k}\ge 2\delta$.

Let $X$ be a Banach space containing an isomorphic copy of $\ell_1$. By the James distortion theorem there is, given $\ep\in(0,1)$, a normalized sequence $(x_k)$ in $X$ which satisfies \eqref{eq:ell1} with the constant $1-\ep$ in place of $\delta$. It follows that $\bs{B_X}\ge \tcca{x_k}\ge 2(1-\ep)$. Since $\ep\in(0,1)$ is arbitrary, $\bs{B_X}\ge 2$. The converse inequality is obvious.

Finally, the equality $\wbs{B_{\ell_1}}=0$ follows from the Schur property of $\ell_1$.

(2) The inequality $\bs{B_X}\le 2$ is trivial. The inequality $\bs{B_X}\ge 1$ follows from Theorem~\ref{P:wbs<bs<beta} since for a nonreflexive space $X$ one has $\wck X{B_X}=1$ (this follows for example from \cite[Theorem 1]{Gr-James} and \cite[Proposition 2.2]{CKS}).

The spaces $c_0$ and $c$ are isomorphic and, moreover, they enjoy the weak Banach-Saks property by \cite{farnum}. Therefore $\wbs{B_{c_0}}=\wbs{B_c}=0$.

To show that $\bs{B_c}=2$ define a sequence $(x_k)$ in $B_c$ by the formula
$$x_k(i)=\begin{cases} 1, & 1\le i\le k,\\ -1, & i\ge k+1. \end{cases}$$
Let $(x_{k_n})$ be any subsequence of $(x_k)$. Denote by $y_n=\frac1n\sum_{j=1}^n x_{k_j}$ for $n\in\en$. Let $m<n$ be two natural numbers. Then
$$y_m(k_m+1)=-1 \mbox{\quad and\quad} y_n(k_m+1)=\frac1n(m\cdot(-1) +(n-m)\cdot 1)=1-\frac{2m}n.$$
Hence $\norm{y_m-y_n}\ge 2-\frac{2m}n$. For any fixed $m$ this value has limit $2$ for $n\to\infty$, thus $\cca{x_{k_n}}\ge 2$. It follows that $\tcca{x_k}\ge 2$, so $\bs{B_c}\ge 2$. 

Finally, it remains to show that $\bs{B_{c_0}}\le 1$. To do this let us fix a sequence $(x_k)$ in $B_{c_0}$. Up to passing to a subsequence we may suppose that the sequence $(x_k)$ pointwise converges to some $x\in B_{\ell^\infty}$. Fix an arbitrary $\ep\in(0,1)$. We will construct
increasing sequences of natural numbers $(k_n)$ and $(p_n)$ using the following inductive procedure:
\begin{itemize}
	\item $k_1=1$;
	\item $\abs{x_k(i)}<\ep$ for $k\le k_n$ and $i\ge p_n$;
	\item $\abs{x_k(i)-x(i)}<\ep$ for $i\le p_n$ and $k\ge k_{n+1}$.
\end{itemize}
It is clear that this construction can be performed, using the facts that the points $x_k$ belong to $c_0$ and that the sequence $(x_k)$ pointwise converges to $x$. Let us consider the sequence $(x_{k_n})$ and set $y_n=\frac1n(x_{k_1}+\dots+ x_{k_n})$ for $n\in\en$. Fix an arbitrary $i\in\en$.
Let $m\in\en$ be the minimal number such that $i\le p_m$. By the construction we have $\abs{x_{k_n}(i)}<\ep$ for $n<m$ and  $\abs{x_{k_n}(i)-x(i)}<\ep$ for $n>m$, thus
$$ \abs{x_{k_n}(i)-\frac{x(i)}2}<\frac12\abs{x(i)}+\ep \mbox{ for }n\in\en\setminus \{m\}.$$
It follows that for any $N\in\en$ we have
$$\begin{aligned} \abs{y_N(i)-\frac{x(i)}2}&\le \frac1N\sum_{n=1}^N\abs{x_{k_n}(i)-\frac{x(i)}2}\le \frac1N\left((N-1)\left(\frac12\abs{x(i)}+\ep\right)+\frac32\right)
\\&\le\frac12\abs{x(i)}+\ep +\frac3{2N}. \end{aligned} $$

Therefore for any $M,N\in\en$ we have
$$\norm{y_N-y_M}\le \|x\|+2\ep+\frac3{2N}+\frac3{2M},$$
so clearly $\cca{x_{n_k}}=\ca{y_k}\le \|x\|+2\ep$. Since $\ep\in(0,1)$ is arbitrary, we get $\tcca{x_k}\le \|x\|\le 1$. This completes the proof.
\end{proof}

\section{Final remarks and open problems}

It is natural to ask whether the inequalities in our results are optimal and which inequalities may become strict. 

Let us start by Theorem~\ref{P:wbs<bs<beta}. 
\begin{itemize}
  \item If $X=C[0,1]$ and $A=B_X$, then $\wbs{A}=\bs{A}=\beta(A)=2$, hence we have equalities. Indeed, $\wbs{A}=2$ by Theorem~\ref{T:dichotomie} and obviously $\beta(A)\le 2$.
	\item If $X=\ell_1$ and $A=B_X$, then $\wbs{A}=0$ by the Schur property of $\ell_1$, obviously $\wck X{A}\le 1$ (in fact, $\wck X{A}=1$ since $\ell_1$ is not reflexive) and $\bs{A}=\beta(A)=2$ by Theorem~\ref{T:nr}, hence the first inequality is strict, the second one becomes equality. 
	\item If $X=c_0$ and $A=B_X$, then $\wbs{A}=0$ by \cite{farnum}, $\wck X{A}=1$ since $c_0$ is not reflexive (in this concrete case the equality can
	be verified directly by the use of the sequence $(x_k)$ where $x_k=e_1+\dots+e_k$), $\bs{A}=1$ by Theorem~\ref{T:nr} and $\beta(A)=2$ (the constant $2$ is attained by the sequence $x_k=e_1+\dots+e_k-e_{k+1}$), hence the first inequality becomes equality and the second one is strict.  
\end{itemize}

So, it seems that Theorem~\ref{P:wbs<bs<beta} is optimal. 

Further, let us focus on Theorem~\ref{P:wbs<2wu<4sm}. The first inequality may become equality -- it is the case if $A=B_X$ by Theorem~\ref{T:dichotomie} and Lemma~\ref{l:dichotomie}. However, we know no example when the first inequality is strict. 
The second inequality may become equality, for example if $X$ is the Baernstein space from \cite{baer} and $A$ is the canonical basis of $X$.
In this case the last inequality is strict. We do not know any example when the second inequality is strict. So, we can ask the following 
question.

\begin{question} Let $X$ be a Banach space and $A\subset X$ a bounded set. Is it necessarily true that
$$\wbs{A}=2\sm{A}=2\swu{A}\quad ?$$
\end{question}

\section*{Acknowledgement}

We are grateful to the referee for suggesting to us Lemma~\ref{LLL} which substantially simplified the original proof of Lemma~\ref{L:ramsey}.

\def\cprime{$'$} \def\cprime{$'$}


\end{document}